\newtheorem{theorem}{Theorem}[section]
\newtheorem*{thm*}{Theorem}
\newtheorem{lemma}[theorem]{Lemma}
\newtheorem{proposition}[theorem]{Proposition}
\theoremstyle{definition}
\newtheorem{definition}[theorem]{Definition}
\newtheorem{remark}[theorem]{Remark}
\newtheorem{example}[theorem]{Example}
\renewcommand{\geq}{\geqslant}
\renewcommand{\leq}{\leqslant}
\newcommand{\sub}{\subseteq}
\newcommand{\eps}{\varepsilon}
\renewcommand{\phi}{\varphi}
\newcommand{\mc}[1]{{\mathcal{#1}}}
\newcommand{\ms}[1]{{\mathscr{#1}}}
\newcommand{\B}{\mathscr{B}}
\newcommand{\C}{\mathbb{C}}
\newcommand{\E}{\mathbb{E}}
\newcommand{\F}{\mathcal{F}}
\renewcommand{\H}{\mathscr{H}}
\newcommand{\K}{\mathbb{K}}
\newcommand{\N}{\mathbb{N}}
\renewcommand{\P}{\mathbb{P}}
\newcommand{\R}{\mathbb{R}}
\let\Re\relax
\DeclareMathOperator{\Re}{\operatorname{Re}}
\renewcommand{\Im}{\operatorname{Im}}
\newcommand{\one}{\mathbbm{1}}
\newcommand{\trace}{\operatorname{trace}}
\newcommand{\cov}{\operatorname{cov}}
\newcommand{\Var}{\operatorname{Var}}
\newcommand{\PVar}{\operatorname{PVar}}
\newcommand{\pcov}{\operatorname{pcov}}
\renewcommand{\d}{\operatorname{d}\hspace{-0.5mm}}
\newcommand{\norm}[1]{\left\lVert#1\right\rVert}
\newcommand{\opnorm}[1]{\norm{#1}_{\text{op}}}
\newcommand{\ptnorm}[1]{\norm{#1}_{\text{PT}}}
\newcommand{\lownorm}[1]{\lVert#1\rVert}
\newcommand{\brak}[1]{\left\langle#1\right\rangle}
\newcommand{\olsi}[1]{\,\overline{\!{#1}}}
\newcommand{\oolsi}[1]{\mskip.5\thinmuskip\overline{\mskip-.5\thinmuskip{#1}\mskip-.1\thinmuskip}\mskip.5\thinmuskip}
\renewcommand{\bar}[1]{\oolsi{#1}}
\newcommand{\br}{{B_\R}}
\newcommand{\hr}{{H_\R}}
\newcommand{\brp}{{B_\R'}}
\newcommand{\test}{{C_0^\infty(D)}}
\newcommand{\rd}{{\R^d}}
\newcommand{\fls}{(-\Delta)^s}
\newcommand{\lso}{{\mathscr L_s(D)}}
\newcommand{\ls}{{\mathscr L_s}}
\newcommand{\lsd}{{\mathscr L_{-s}(D)}}
\newcommand{\lsm}{{\mathscr L_{-s}}}
\newcommand{\lsp}{{\mathscr L_{-s}'}}
\newcommand{\ltp}{{\ms L_{-t}'}}
\newcommand{\ltd}{{L^2(D)}}
\newcommand{\lt}{{L^2}}
\newcommand{\ds}{{(-\Delta)^{-s}}}
\title{Complex abstract Wiener spaces}
\author{Tess J. van Leeuwen}
\email{t.j.vanleeuwen@uu.nl}
\author{Wioletta M. Ruszel}
\address{Utrecht University, Budapestlaan 6, 3584 CD Utrecht, The Netherlands}
\email{w.m.ruszel@uu.nl}
\subjclass{60G07, 60G15, 60G60, 30A99, 32A70, 46F25, 81T99}
\keywords{(complex) abstract Wiener spaces, real structures, (complex) fractional Gaussian fields, Feynman-Kac formula, quantum free fields}
\begin{document}

\begin{abstract}
Real abstract Wiener spaces (AWS)  were originally defined by Gross using measurable norms, as a generalisation of the theory of advanced integral calculus in infinite dimensions as introduced by Cameron and Martin. In this paper we present a rigorous, complete and self-contained general framework for $\mathbb{K}$-AWS, where $\mathbb{K} \in \{\mathbb{R},\mathbb{C}\}$ using the language of characteristic functions instead of measurable norms. In particular, we will prove that
$X$ is a symmetric $H$-valued Gaussian field over $\mathbb{K}$ iff the covariance function can be written in terms of some non-negative, self-adjoint trace class operator, and that the existence and uniqueness of $X$ is equivalent to the  $\mathbb{K}$-AWS.
Finally we will relate the $\mathbb{C}$-AWS to the $\mathbb{R}$-AWS by way of a real structure, which is a real linear, complex anti-linear involution on a complex vector space. This allows for 
a commutative relation between the real and complex Gaussian fields and the real and complex abstract Wiener spaces. We will construct specific examples which fall under this framework like the complex Brownian motion, complex Feynman-Kac formula and complex fractional Gaussian fields.
\end{abstract}
\maketitle

\section{Introduction}

Abstract Wiener spaces (AWS) provide a natural generalisation of classical probability spaces for describing Gaussian measures. An AWS is a triple $(H, B, \nu)$ consisting of a separable Hilbert space $H$, a separable Banach space $B$ (densely containing $H$), and a Gaussian probability measure $\nu$ on $B$ whose covariance structure is that of a standard Gaussian random element on $H$. The challenge in defining the latter for an infinite-dimensional Hilbert space arises from the fact that, if expressed as 
\[
Z = v_1 Z_1 + v_2 Z_2 + \ldots + v_n Z_n + \ldots,
\]
where $(Z_n)_{n\in\N}$ is a sequence of i.i.d. Gaussian random variables and $(v_n)_{n\in\N}$ is an orthonormal basis for $H$, the resulting random element $Z$ has almost surely infinite $H$-norm. As an alternative we thus consider a Gaussian random element defined not on the Hilbert space $H$, but on a larger Banach space $B$, such that its covariance structure is determined by $H$.

Real abstract Wiener spaces were originally defined by Gross in \cite{gross} as a generalisation of the theory of advanced integral calculus in infinite dimensions as introduced by Cameron and Martin in \cite{CM1, CM2}. They considered the probability measure $\nu$ generating the Brownian motion, defined on the underlying Banach space $B$ of continuous functions $x: [0, T] \to \R$, starting in zero. Only from work by Segal \cite{segal1, segal2} it became clear that the essential element for the formulas developed by Cameron and Martin was the Euclidean (inner product) structure of the Hilbert space $H$ of $(1, 2)$-Sobolev functions $x: [0, T] \to \R$ starting in the origin. Gross then generalised this concept of a triple $(H, B, \nu)$ for any separable real Hilbert space $H$. 

In an earlier article \cite{gross_meas}, Gross introduced measurable norms. In \cite{gross} the key insight was that if $B$ is the Banach space completion of a Hilbert space $H$ endowed with a measurable norm (necessarily weaker than the $H$-norm), then the Gaussian distribution on $B$ becomes countably additive. Properties of Gaussian measures on Banach spaces and AWS are further discussed in \cite{kuo, stroock}. Classical examples for which the AWS are the natural underlying probability spaces include (fractional) Brownian motion, (fractional) Gaussian fields, or space-time white noise, as detailed in \cite{finlayson, chiusole} and the references therein. The simplest case of Brownian motion is precisely the example introduced by Cameron and Martin.
AWS are still an active area of research. In fact, in the recent paper \cite{chiusole}, the authors enhance the AWS formalism to cover key results, such as large deviations, Cameron-Martin formulas, and Fernique's concentration of measure theorem, within the context of Gaussian rough paths and regularity structures.

From the mathematical physics perspective, there is an interesting link between abstract Wiener spaces and (free) Euclidean field theory, see e.g.~Section 4.3 in \cite{stroock}. Important are Hilbert spaces which are invariant under the Euclidean group. In fact, fields are formulated as probability measures on function spaces, particularly measures on Schwartz distributions over the Euclidean space.
The path integral formulation requires defining a probability measure on a space of fields, often modelled as a Gaussian measure with additional interactions. 
The construction of a free Euclidean field is directly linked to an AWS, where the underlying Hilbert space is the space of solutions to a covariance operator. 
The AWS formalism provides a rigorous approach to Wiener integration in field theory. This mathematical framework plays a crucial role in ensuring the well-defined nature of the Euclidean path integral (in the non-interacting case) and provides a probabilistic interpretation of field theory in terms of stochastic processes.

Complex Gaussian random elements are important objects in many areas beyond mathematics. For example, they can be used to model quantum harmonic oscillators or electromagnetic fields in physics \cite{aboujaoude}, signals in sonar or radar systems in engineering such as signal processing \cite{lapidoth}, or signals in machine learning reconstruction algorithms. In random matrix theory, Ginibre ensembles are matrices with i.i.d.~complex Gaussians entries, see e.g.~\cite{anderson}. 

Extending the framework of real abstract Wiener spaces ($\R$-AWS) to the complex setting when working with complex Gaussians can be done in two different ways. In \cite{taniguchi, kusuoka, shigekawa}, the authors respectively consider holomorphic functions, pseudo-convex domains and It\^o calculus on so-called {\emph almost} complex abstract Wiener spaces ($\mathbb{C}$-AWS). 

An almost $\mathbb{C}$-AWS is essentially an $\R$-AWS with an additional linear operator 
$J: H \rightarrow H$
such that $J^2=-\operatorname{id}$, mimicking multiplication with the complex number $i$ on $H$ without assuming that $H$ is intrinsically complex. In some sense $J$ is a complex structure added on top of the real space which does not need to be integrable. Almost $\mathbb{C}$-AWS are natural spaces for modelling intrinsic real Gaussian fields or measures which exhibit  complex-like structures. For example, one may consider integration in Euclidean quantum field theory for those fields which are intrinsically real. There Wick rotation introduces complex structures giving rise to such an almost complex linear operator $J$. 

In particular, in \cite{shigekawa} the author proves that the It\^o-Wiener expansion of an $L^p$-holomorphic function on a complex abstract Wiener space converges strongly in the $L^p$ norm. They establish that the space of $L^p$-holomorphic functions is the closure of holomorphic polynomials, ensuring their density. Additionally, they present a unicity theorem stating that if two such functions agree on a set of positive measure, they are equal almost everywhere. The work of \cite{kusuoka} extends the concept of pseudoconvexity to almost complex abstract Wiener spaces, providing a framework for infinite-dimensional holomorphic function theory. The authors in \cite{taniguchi}  generalize the notion of almost complex structures to abstract Wiener spaces, even when the complex structure does not extend continuously to the entire Banach space. They assume that $J$ is bounded and linear and extends continuously to the whole space. The paper constructs a new complex abstract Wiener space from a given almost complex structure and uses this to define holomorphic functions, their skeletons (restrictions to the Cameron–Martin space), and a version of the Cauchy-Riemann operator via Malliavin calculus.

In this work we directly define 
complex abstract Wiener spaces. Under the assumptions of \cite{taniguchi} ($J$ is linear bounded operator, it extends to a continuous linear operator on the whole Banach space and preserves the Gaussian measure)
 this gives an equivalent construction. Our advantage is that the direct approach gives an intrinsic global complex structure and needs not verify the assumptions and existence of the operator $J$.

While on flat spaces those approaches differ only conceptually, the almost complex Wiener spaces become relevant if the Banach space is replaced by a manifold, \cite{beltita}. In those cases $J$ may be non-linear and even non-integrable \cite{lempert}. 
 
In our case, the Hilbert and Banach spaces are assumed to be complex, and $\nu$ is a measure which is defined over the complex space $B$. The $\mathbb{C}$-AWS is a natural framework when working  with truly complex fields, which appear for example in the studies  of charged particles in quantum field theories or the Fock space representation of quantum free fields (also known as the complex Gaussian free field).\\

The aim of this paper is firstly to present a rigorous, complete and self-contained general framework for $\mathbb{K}$-AWS, where $\mathbb{K} \in \{\mathbb{R},\mathbb{C}\}$. In particular, we want to study the complex case $\mathbb{K}=\mathbb{C}$, and extend Fernique's theorem to hold for the complex case. While Gross' original article defines the AWS using measurable norms, we follow the definition by way of the characteristic function as given in \cite{stroock}. Even more than in \cite{stroock}, we focus on the perspective of Gaussian fields to highlight the connection with integral calculus. In this paper we study symmetric Gaussians, which in the real case are centred (exhibiting mirror symmetry around 0) and in the complex case are proper (exhibiting circular symmetry around 0). In fact, if $H$ is a separable $\K$-Hilbert space, $B$ is a separable $\K$-Banach space containing $H$, and $\nu$ is a probability measure on $B$ we show the following.

\begin{thm*}[Covariance as trace class operator]
$X$ is a symmetric $H$-valued Gaussian field over $\mathbb{K}$ iff the covariance function can be written in terms of some non-negative, self-adjoint trace class operator.
\end{thm*}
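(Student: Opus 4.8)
For the forward direction, I would start from a centred (resp.~proper) $H$-valued Gaussian field $X$ and first invoke Fernique's theorem, in the complex version established earlier, to obtain $\E\norm{X}_H^2<\infty$. Two applications of the Cauchy--Schwarz inequality then show that
\[
(h,g)\longmapsto\E\!\left[\brak{h,X}\,\overline{\brak{g,X}}\right]
\]
is a bounded sesquilinear form (bilinear when $\K=\R$) on $H$, so the Riesz representation theorem produces a bounded operator $Q$ with $\E\!\left[\brak{h,X}\,\overline{\brak{g,X}}\right]=\brak{Qh,g}$. Self-adjointness of $Q$ follows from the Hermitian symmetry of the covariance, and non-negativity from $\brak{Qh,h}=\E\orm{\brak{h,X}}^2\geq 0$.

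The essential step, which I expect to be the main obstacle, is the trace class property: it is precisely here that one must exploit that $X$ is genuinely $H$-valued and square-integrable, rather than merely a weak field indexed by $H$. Fixing an orthonormal basis $(e_n)_{n\in\N}$ of $H$ and combining Parseval's identity with monotone convergence yields
\[
\trace(Q)=\sum_{n\in\N}\brak{Qe_n,e_n}=\sum_{n\in\N}\E\orm{\brak{e_n,X}}^2=\E\norm{X}_H^2<\infty,
\]
so that $Q$ is trace class. The delicate point is that this identity is meaningful only once one knows the second moment is finite, which is exactly why Fernique's theorem, rather than mere measurability of the field, is indispensable.

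For the converse I would reconstruct the field from $Q$. Writing the spectral decomposition $Qe_n=\lambda_n e_n$ with $\lambda_n\geq 0$ and $\sum_{n\in\N}\lambda_n=\trace(Q)<\infty$, and taking an i.i.d.~sequence $(\xi_n)_{n\in\N}$ of standard real (resp.~proper complex) Gaussians, I set $X:=\sum_{n\in\N}\sqrt{\lambda_n}\,\xi_n e_n$. The partial sums are Cauchy in $L^2(\Omega;H)$ since $\E\norm{\sum_{n=M}^{N}\sqrt{\lambda_n}\,\xi_n e_n}_H^2=\sum_{n=M}^{N}\lambda_n\to 0$, so the series converges to an honest $H$-valued random variable $X$; as an $L^2$-limit of Gaussians it is itself Gaussian and centred (resp.~proper). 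A direct computation recovers $\E\!\left[\brak{h,X}\,\overline{\brak{g,X}}\right]=\brak{Qh,g}$, and in the complex case the vanishing pseudo-covariance of the $\xi_n$ forces $\PCov\!\left(\brak{h,X},\brak{g,X}\right)=0$, so that $X$ is indeed proper and has covariance operator $Q$.
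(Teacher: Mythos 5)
Your proposal is correct and takes essentially the same route as the paper: the forward direction via Fernique's theorem, Riesz representation of the bounded covariance form, and the orthonormal-basis trace computation with monotone convergence; the converse via the spectral decomposition of the trace class operator and a Gaussian series converging in $L^2(\Omega;H)$. The only differences are cosmetic — you realise the covariance operator on $H$ where the paper works on $H'$, and you invoke (rather than prove, as the paper does in a separate lemma) the fact that $L^2$-limits of Gaussian fields remain Gaussian with the limiting covariance.
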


\begin{thm*}[Existence and uniqueness]
For any $H$ there exist $B$ and $\nu$ such that $(H, B, \nu)$ is a $\K$-AWS. Conversely, if $\nu$ is a symmetric Gaussian measure on $B$, then there exists a unique $H$ such that $(H, B, \nu)$ is a $\K$-AWS.
\end{thm*}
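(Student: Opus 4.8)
The plan is to prove the two directions of the existence/uniqueness statement separately, relying on the previous theorem (Covariance as trace class operator) as the main technical engine. The forward direction constructs $B$ and $\nu$ from a given $H$; the converse recovers $H$ from $(B,\nu)$ and establishes its uniqueness.

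The plan is to establish the two assertions separately, using the preceding characterisation of covariances as trace class operators together with the canonical Cameron--Martin construction. In one direction I construct a pair $(B,\nu)$ out of an abstract $H$; in the other I recover $H$ intrinsically from $(B,\nu)$ and read off uniqueness for free.

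For the existence direction, I would fix an orthonormal basis $(e_n)_{n\in\N}$ of $H$ and a sequence of strictly positive weights $(a_n)_{n\in\N}$ with $a_n \le 1$ and $\sum_n a_n^2 < \infty$. Define $B$ to be the completion of $H$ with respect to the weaker norm $\norm{x}_B^2 = \sum_n a_n^2\,\orm{\brak{x,e_n}}^2$; then the inclusion $\iota\colon H \hookrightarrow B$ is continuous with dense range, and it is Hilbert--Schmidt since $\sum_n \norm{\iota e_n}_B^2 = \sum_n a_n^2 < \infty$. Writing $B$ in its own orthonormal basis $(e_n/a_n)$, the covariance operator of the intended measure is $\operatorname{diag}(a_n^2)$, which is non-negative, self-adjoint and trace class; applying the previous theorem with $B$ in the role of the ambient Hilbert space therefore furnishes a genuine (countably additive) centred resp.\ proper $B$-valued Gaussian field $X = \sum_n Z_n e_n$, equivalently realised as the $L^2(\Omega;B)$-limit of the partial sums since $\E\norm{X}_B^2 = c_\K \sum_n a_n^2 < \infty$. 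Let $\nu$ be its law. It then remains to compute the characteristic functional $\hat\nu(\phi) = \exp\!\big(-\tfrac12\norm{\iota^*\phi}_H^2\big)$ for $\phi \in B^*$ and to confirm that the Cameron--Martin space of $\nu$ is exactly $H$ with its original inner product, so that $(H,B,\nu)$ is a $\K$-AWS.

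For the converse, given a centred resp.\ proper Gaussian measure $\nu$ on $B$, I would \emph{define} $H$ to be the Cameron--Martin space of $\nu$: letting $Q\colon B^* \to B$ denote the covariance operator of $\nu$, set $H = \overline{Q(B^*)}$ equipped with the inner product $\brak{Q\phi,Q\psi}_H = \brak{\phi,Q\psi}$, and verify that $H$ is a separable $\K$-Hilbert space (separability being inherited from $B$) which embeds densely and continuously into $B$, so that $(H,B,\nu)$ is a $\K$-AWS. Uniqueness is then immediate, since in any $\K$-AWS $(H',B,\nu)$ the space $H'$ is forced to coincide with the Cameron--Martin space of $\nu$, an object depending on $\nu$ alone; hence $H' = H$.

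The main obstacle is the verification in the existence direction that the Cameron--Martin space of the constructed $\nu$ is precisely $H$, and not some strictly larger subspace of $B$: this requires identifying $Q^{1/2} = \operatorname{diag}(a_n)$ with respect to $(e_n/a_n)$ and checking that $Q^{1/2}(B)$, with its induced norm, recovers $H$ isometrically. A second delicate point is ensuring that the entire argument transfers verbatim to the case $\K = \C$ once properness is imposed, so that the pseudo-covariance plays no role and the single non-negative self-adjoint trace class operator supplied by the previous theorem suffices to pin down both $B$ and $\nu$.
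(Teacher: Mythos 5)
Your proposal is correct and follows essentially the same route as the paper: for existence, the paper likewise builds $B$ as (the dual description of) the completion of $H$ under a norm weakened by a positive, self-adjoint Hilbert--Schmidt operator $A$ on $H'$ -- your diagonal choice $A = \operatorname{diag}(a_n)$ is the special case that suffices for the statement -- and then applies the trace-class characterisation theorem to $B$ viewed as a Hilbert space to obtain $\nu$ with characteristic functional $\exp(-\tfrac12\lownorm{f}_{H'}^2)$. For the converse, the paper defers to Stroock's Theorem 3.3.2 (adapted to $\K=\C$), which is precisely the Cameron--Martin construction and uniqueness argument you sketch.
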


Essentially, this result says that Gaussian fields are in some sense equivalent to abstract Wiener spaces. For our third and final result we will relate the $\C$-AWS to the $\R$-AWS by way of a real structure, which is a real linear, complex anti-linear involution on a complex vector space. If there is a bounded real structure $\sigma$ on a complex Banach space $B$, then $B$ can be written as the vector space direct sum $B_\R \oplus i B_\R$, where $B_\R$ is the invariant set under $\sigma$ endowed with the $B$-norm, and furthermore the topology on $B$ is equivalent to the product topology on $B_\R \times B_\R$. Assuming additionally that $\sigma$ is a bounded real structure on $B$ which is isometric when restricted to $H$, we show that:

\begin{thm*}
$Z \sim \nu$ on the $\mathbb{C}$-AWS iff $X=\sqrt{2}\Re Z$ and $Y=\sqrt{2}\Im Z$ are i.i.d.~on some $\mathbb{R}$-AWS.
\end{thm*}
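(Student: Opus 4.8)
The whole argument is most cleanly carried out at the level of covariance operators, since by the covariance theorem a proper complex (resp.~centred real) Gaussian field is determined by a single non-negative, self-adjoint, trace-class operator, and by the existence-and-uniqueness theorem this operator is exactly what the AWS records. The plan is therefore to pass the isometric real structure $\sigma$ through this correspondence. First I would unpack the algebra of $\sigma$: it yields compatible decompositions $H = \hr \dotplus i\hr$ and $B = \br \dotplus i\br$ together with a dual structure $B' = \brp \dotplus i\brp$, and --- this is the property that makes everything real --- the restriction of the Hermitian inner product to $\hr\times\hr$, and of the dual pairing to $\brp\times\br$, is real-valued. Writing $Z = \Re Z + i\Im Z$ with $\Re Z,\Im Z\in\br$ and a dual element as $\phi = \phi_1 + i\phi_2$ with $\phi_1,\phi_2\in\brp$, one obtains from $\brak{\phi_1,\sigma Z} = \overline{\brak{\phi_1,Z}}$ the identities
\[
\brak{\phi_1,X} = \sqrt2\,\Re\brak{\phi_1,Z},\qquad \brak{\phi_2,Y} = \sqrt2\,\Im\brak{\phi_2,Z},
\]
where $X=\sqrt2\,\Re Z$ and $Y=\sqrt2\,\Im Z$. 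These convert all moments of $(X,Y)$ into moments of the scalar complex Gaussians $\brak{\phi,Z}$.

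For the direction ($\Rightarrow$), assume $Z\sim\nu$, so that $Z$ is a proper complex Gaussian field with covariance operator $Q$, meaning $\E[\brak{\phi,Z}\overline{\brak{\psi,Z}}] = \brak{\phi,Q\psi}$ and, by properness, the pseudo-covariance $\E[\brak{\phi,Z}\brak{\psi,Z}]$ vanishes. Feeding the identities above into the second moments and using properness to annihilate the $\brak{\phi,Z}^2$-terms gives, for all invariant $\phi_1,\phi_2$,
\[
\E[\brak{\phi_1,X}^2] = \E[\brak{\phi_1,Y}^2] = \brak{\phi_1,Q\phi_1},\qquad \E[\brak{\phi_1,X}\,\brak{\phi_2,Y}] = -\Im\brak{\phi_1,Q\phi_2}.
\]
Since $(X,Y)$ is a jointly real-Gaussian pair (a real-linear image of $Z$), the first chain of equalities already forces $X$ and $Y$ to be identically distributed with common covariance $Q_\R := Q|_{\hr}$, while independence is equivalent to the vanishing of the last expression, i.e.~to $Q$ commuting with $\sigma$. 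Granting this compatibility --- which is where the isometric real-structure hypothesis is used in full --- I would conclude that $X,Y$ are i.i.d., and then check that $(\hr,\br,\nu_\R)$ is an $\R$-AWS by verifying that $Q_\R$ is non-negative, self-adjoint and trace class as a real operator on $\hr$, all inherited from $Q$.

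The direction ($\Leftarrow$) is the mirror image: given i.i.d.~$X,Y\sim\nu_\R$ on an $\R$-AWS $(\hr,\br,\nu_\R)$, set $Z := \tfrac1{\sqrt2}(X+iY)$, which is $B$-valued and jointly Gaussian, hence a centred complex Gaussian field. Running the same second-moment computation backwards, the independence and the equality of the laws of $X$ and $Y$ are exactly what make every pseudo-covariance $\E[\brak{\phi,Z}^2]$ vanish, so $Z$ is proper; its covariance operator is the complexification $Q$ of $Q_\R$, which is non-negative, self-adjoint and trace class on $H$. The covariance and existence-and-uniqueness theorems then identify the law of $Z$ with the Gaussian measure $\nu$ of the $\C$-AWS $(H,B,\nu)$, completing the equivalence.

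The main obstacle is the precise link between properness on the complex side and genuine independence on the real side, encapsulated in the identity $\E[\brak{\phi_1,X}\brak{\phi_2,Y}] = -\Im\brak{\phi_1,Q\phi_2}$: independence is not automatic from properness alone but is equivalent to the covariance $Q$ being compatible with $\sigma$ (equivalently $\sigma_*\nu = \nu$), and making this compatibility part of the correspondence --- and checking it is respected in both directions --- is the delicate point. A secondary but unavoidable source of care is the bookkeeping of normalisations: tracking the real-Gaussian $\tfrac12$ against the complex-Gaussian $\tfrac14$ in the characteristic exponents is precisely what fixes the factor $\sqrt2$ in $X=\sqrt2\,\Re Z$ and $Y=\sqrt2\,\Im Z$, and both directions must use the same convention for the argument to close.
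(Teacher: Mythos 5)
Your core second-moment computations are sound and essentially parallel what the paper's characteristic-function proof establishes: the identities $\brak{\phi,X}=\sqrt2\,\Re\brak{\phi,Z}$ and $\brak{\phi,Y}=\sqrt2\,\Im\brak{\phi,Z}$, properness killing the pseudo-covariances, and the cross-covariance $\E\big[\brak{\phi_1,X}\brak{\phi_2,Y}\big]=-\Im(\phi_1,\phi_2)_{H'}$, whose vanishing is, for jointly Gaussian fields, equivalent to independence. The most serious problem is the frame you put around these computations. The covariance of an AWS measure is \emph{not} a trace-class operator: the trace-class characterisation (Theorem 3.1) applies to $H$-\emph{valued} Gaussian fields, whereas the AWS field $Z$ is $B$-valued with $\nu(H)=0$, and its covariance is the $H'$-inner product itself, i.e.\ the identity on $H'$, whose trace is infinite. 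This is precisely why a standard Gaussian on an infinite-dimensional $H$ does not exist and why one passes to $B$ at all. Consequently your closing step in each direction --- ``verify that $Q_\R$ is non-negative, self-adjoint and trace class on $\hr$'' (and dually for its complexification on $H$) --- is not the right criterion and would in fact fail: what must be verified is that the covariance of $X$ (resp.\ $Z$) is exactly the $\hr'$- (resp.\ $H'$-) inner product, equivalently that the characteristic function is $f\mapsto\exp(-\tfrac12\norm{f}_{\hr'}^2)$ on $\brp$, which is the definition of an abstract Wiener space.

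Second, the crux of the theorem is deferred rather than proved. You correctly isolate that independence hinges on $\Im(\phi_1,\phi_2)_{H'}=0$ for $\phi_1,\phi_2\in\brp$, but you ``grant'' this compatibility, and in your last paragraph even suggest it may need to be added to the correspondence (e.g.\ as $\sigma_*\nu=\nu$). It is not an extra hypothesis; it follows from the standing assumption that $\sigma|_H$ is an isometric real structure on $H$: then $\hr$ is a real Hilbert space, and the Riesz representative of any $\phi\in\brp$ lies in $\hr$ --- writing $h_\phi=a+ib$ with $a,b\in\hr$, realness of $\phi$ on $\hr$ forces $(x,b)_H=0$ for all $x\in\hr$, hence $b=0$ --- so that $(\phi_1,\phi_2)_{H'}=(h_{\phi_2},h_{\phi_1})_H\in\R$. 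The paper's parallelogram-law step, $\norm{h}_{H'}^2=\norm{f}_{H'}^2+\norm{g}_{H'}^2$ for $h$ built from invariant $f,g$, silently rests on the same realness; without it neither your argument nor any variant closes. Finally, neither of your directions addresses the other half of the AWS definition: one must show $\hr\sub\br$ (resp.\ $H\sub B$) algebraically, densely and topologically, which the paper isolates as a separate lemma (continuity of $\Re$ transfers density downward, the decomposition $z=x+iy$ transfers it upward). With the trace-class framing removed, the realness argument supplied, and the embedding lemma added, your moment-based route does become a correct proof, and it is then essentially the paper's proof written in covariance rather than characteristic-function language.
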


In fact our third result leads to the following commutative relation between the real and complex Gaussian fields and the real and complex abstract Wiener spaces.

\begin{center}
    \begin{tikzcd}
        \begin{tabular}{c}
            $(H, B, \nu)$ is a \\
            complex AWS
        \end{tabular}
        \arrow[rrr, "{Z \,\sim\, \nu}", leftrightarrow]
        \arrow[dd, "{B_\R \,=\, \Re B\;}", leftrightarrow, ']
        &&&
        \begin{tabular}{c}
            $Z$ is a proper complex \\
            $B$-valued Gaussian field
        \end{tabular}
        \arrow[dd, "{\;Z \,=\, \frac{X + i Y}{\sqrt2}}", leftrightarrow]
        \\ \\
        \begin{tabular}{c}
            $(H_\R, B_\R, \nu_\R)$ is a \\
            real AWS
        \end{tabular}
        \arrow[rrr, "{X, \,Y \,\overset{\text{i.i.d}}{\sim}\, \nu_\R}", leftrightarrow]
        &&&
        \begin{tabular}{c}
            $X$ and $Y$ are i.i.d.~centred real\\
            $B_\R$-valued Gaussian fields
        \end{tabular}
    \end{tikzcd}
\end{center}

Furthermore we will construct concrete examples which fall into our framework of $\mathbb{C}$-AWS. First we use $\mathbb{C}$-AWS to construct  complex Brownian motion and we will show a complex version of the Feynman-Kac formula. Finally  we construct the complex fractional Gaussian fields. The complex Feynman-Kac formula and complex fractional Gaussian fields do not appear earlier in the literature to the best of the authors knowledge. It would be interesting to develop fundamental results in this setting like Schilder's theorem, large (moderate) deviation results, and differential Malliavin calculus in the future. 

The organisation of the paper is as follows. 
In Section \ref{sec:pre} we will define $\K$-random elements, $\K$-AWS, and real structures. Section \ref{sec:results} contains the precise results which we will prove in Section \ref{sec:proofs}. Finally  Section \ref{sec:examples} is devoted to our two examples and a complex version of the Feynman-Kac formula.

\section{Prerequisites}\label{sec:pre}
In this section we will define Banach space valued random elements. 
As the Banach space in question may be complex, we define complex random variables and discuss language to describe both real and complex random variables in Section \ref{ssec:cx_rv}. Within this general framework we define Gaussian fields over $\K \in \{\R, \C\}$ in Section \ref{ssec:gaussian_fields}, and $\K$-abstract Wiener spaces in Section \ref{ssec:AWS}. Finally, Section \ref{ssec:real_struc} introduces the necessary structure to compare complex Gaussian fields to real ones.

We will not explicitly talk about the underlying probability space $(\Omega, \F, \P)$ with integral $\E = \int_\Omega \cdot \d\P$. We will assume it to be large enough to support our random elements. 

With $\K$ we mean either $\R$ or $\C$, in the sense that the statement in question holds for both fields. For example, for $x, y \in \K^d$ with $d \geq 1$ the Euclidean inner product is denoted $x \cdot y = x \bar y^T$ and the Euclidean norm is denoted $|x| = \sqrt{x \cdot x}$. The dimension of $\K$ over $\R$ is denoted throughout by $\kappa$, so $\kappa = 1$ when $\K = \R$ and $\kappa = 2$ when $\K = \C$.

If $B$ is a Banach space, its norm is denoted $\norm \cdot_B$ unless specified otherwise. Furthermore, the topological dual to $B$ is denoted $B'$, with norm 
\[ \norm{f}_{B'} := \opnorm{f} := \sup_{x \in B \setminus \{0\}} \frac{|f(x)|}{\norm{x}_B} = \sup_{x \in B: \norm{x}_B = 1} |f(x)| \]
for $f \in B'$. This is the \emph{operator norm}, defined in general for linear operators from one Banach space to another.

\subsection{\texorpdfstring{$\K\kern-9.75pt{\K}\kern1pt$}{K}-random variables}\label{ssec:cx_rv}

In this section we briefly define complex (Gaussian) random variables, and refer for a more detailed discussion to \cite{lapidoth}. Then we will describe both real and complex random variables with the same language. This will enable us to be more general in the rest of this paper.

A \emph{complex random variable} is a Borel measurable map $Z$ from a probability space to $\C$, which by the identity
\[ \B(\C) \cong \B(\R \times \R) = \B(\R) \otimes \B(\R) \]
may be defined as the complex linear sum $Z = X + i Y$, where $X$ and $Y$ are real random variables. The \emph{characteristic function} of $Z$ is defined to be
 \[ \phi_Z: \C \to \C, \quad w \mapsto \E \exp \big[ i \Re(Z \cdot w) \big], \]
which implies $\phi_{Z}(w) = \phi_{(X, Y)}(x, y)$ for all $x + i y = w \in \C$. Hence the distributions of $Z = X + i Y$ and $(X, Y)$ are equivalent, so properties like independence, uniqueness, or Gaussianity are inherited directly from the theory of real random vectors.

For a complex random variable $Z$ there is a difference between central moments and absolute central moments. We define the \emph{variance} and \emph{pseudo-variance} of $Z$ to be
\[ \Var Z = \E |Z - \E Z|^2, \qquad \PVar Z = \E (Z - \E Z)^2. \]
The covariance and pseudo-covariance are obtained by applying the complex and real polarisation identities, respectively.

\begin{example}
    A complex random variable $Z = X + i Y$ is said to be \emph{Gaussian} when the real random variables $X$ and $Y$ are jointly Gaussian.\footnote{We include random variables with variance zero as degenerate Gaussians.} In this case the characteristic function of $Z$ is given by
    \[ \phi_Z(w) = \exp \Big[ i \Re(\mu \cdot w) - \tfrac14 \big( \sigma^2|w|^2 + \Re(\rho \cdot w^2) \big) \Big], \qquad w \in \C, \]
    where $\mu$, $\sigma^2$, and $\rho$ are the mean, variance, and pseudo-variance of $Z$, respectively. We write $Z \sim \mc{CN}(\mu, \sigma^2, \rho)$. The standard complex Gaussian distribution is $\mc{CN}(0, 1, 0)$.
\end{example}

If a centred (mean zero) and finite variance complex random variable $Z$ additionally satisfies $\PVar Z = 0$, it is said to be \emph{proper}. In the case of Gaussians, proper complex random variables are precisely those that can be obtained by a scaling of a standard complex Gaussian.

\begin{proposition}[Proposition 24.2.11 in \cite{lapidoth}]\label{prop:proper_gaussian}
    Let $W$ be a standard complex Gaussian random variable.
    \begin{enumerate}
        \item For every centred complex Gaussian $Z$ there exist $\alpha, \beta \in \C$ such that $Z \sim \alpha W + \beta \bar W$.
        \item A centred complex Gaussian $Z$ is proper if and only if there is some $\alpha \in \C$ such that $Z \sim \alpha W$.
    \end{enumerate}
\end{proposition}

From this result we observe that a proper complex Gaussian $Z$ has the added property of being \emph{circularly symmetric}, i.e., $Z \sim \alpha Z$ for any $\alpha \in \C$ with $|\alpha| = 1$. A centred real Gaussian $X$ has a similar symmetry, namely $X \sim -X$. For this reason we will say that a $\K$-valued Gaussian random variable $X$ is \emph{symmetric (around zero)} if it is centred when $\K = \R$ and proper when $\K = \C$.\footnote{Any circularly symmetric complex random variable with finite variance is proper, but the converse need not hold (see Section 24.2.3 in \cite{lapidoth}). However, in the case of complex Gaussians the two concepts are equivalent.}

Recall also that $\kappa$ is defined to be the dimension of $\K$ over $\R$. With this, we have the language to describe random variables in $\R$ and $\C$ simultaneously. For example, a $\K$-random variable is a Borel measurable map from a probability space to $\K$, with characteristic function
\[ \phi_X: \K \to \C, \quad a \mapsto \E \exp \big[ i \Re(X \cdot a) \big]. \]
In particular, the characteristic function of a symmetric $\K$-Gaussian is given by $a \mapsto \exp( -\frac1{2\kappa} \sigma^2 |a|^2)$, with $\sigma^2 = \Var X$.

\subsection{\texorpdfstring{$\K\kern-9.75pt{\K}\kern1pt$}{K}-Gaussian fields}\label{ssec:gaussian_fields}

We define (Gaussian) random elements that take values in a real or complex Banach space $B$. In this section and the next we assume the most general case, i.e., that $B$ is a separable Banach space over $\K \in \{\R, \C\}$, and we will talk about $\K$-random fields or $\K$-Gaussian fields.

A thorough discussion of Banach space valued measurable functions and the concept of Bochner integration can be found in \cite{hytonen}. Many results of classical measure and integration theory can also be obtained for Bochner integrals over separable Banach spaces. We will use these without proof when the proof is a one to one generalisation of the classical case.

\begin{definition}\label{def:random_field}
    A map $X: \Omega \to B$ is called a \emph{$B$-valued random field} on $\Omega$ if it is $\F / \B(B)$ measurable. Equivalently, $X$ is a $B$-valued random field if $f(X)$ is a $\K$-random variable for every $f \in B'$. We also say that $X$ is a $\K$-random field.
\end{definition}

The equivalence in the definition above is due to the separability of $B$, see e.g.\ Theorem 1.1.6 in \cite{hytonen}. We will rely heavily on this weaker form of measurability. 

The distribution of a $B$-valued random field $X$ is completely determined by its characteristic function, defined to be
\[ \phi_X: B' \to \C, \quad f \mapsto \E \exp \big[ i \Re{}\hspace{-1pt} \circ f(X) \big]. \]
Like the characteristic function, the first and second moments of $X$ are defined weakly. In particular, the \emph{mean}, \emph{covariance}, and \emph{pseudo-covariance} functions are given by
\begin{gather*}
    m_X(f) = \E f(X), \\
    c_X(f, g) = \cov \big( f(X), g(X) \big), \\
    p_X(f, g) = \pcov \big( f(X), g(X) \big)
\end{gather*}
for all $f, g \in B'$. These functions are linear, sesquilinear, and bilinear, respectively. 

Again due to the equivalence in \Cref{def:random_field} we can obtain properties like independence, uniqueness, and Gaussianity weakly.

\begin{example}\label{ex:gaussian_field}
    A $B$-valued random field $X$ is \emph{Gaussian} if $f(X)$ is a $\K$-Gaussian random variable for every $f \in B'$. In that case the characteristic function of $X$ satisfies
    \[ \phi_X(f) = \exp\Big[ i \Re{}\hspace{-1pt} \circ m_X(f) - \tfrac14 \big( c_X(f, f) + \Re{}\hspace{-1pt} \circ p_X(f, f) \big) \Big], \qquad f \in B'. \]
    In particular, a symmetric Gaussian field $X$ has characteristic function 
    \[
    \phi_X: B' \to \C, \quad f \mapsto \exp \big[ -\tfrac1{2\kappa} c_X(f, f) \big]. \qedhere
    \]
\end{example}

If $X$ and $Y$ are two $B$-valued random fields, then $(X, Y)$ is a $B \times B$-valued random field. For each $h \in (B \times B)'$ the functions $f := h( \cdot, 0)$ and $g := h(0, \cdot)$ are continuous linear maps $B' \to \K$. Hence $(B \times B)' = B' \times B'$ and the characteristic function of $(X, Y)$ is denoted
\[ \phi_{(X, Y)}(f, g) = \E \exp \big[ i \Re \big( f(X) + g(Y) \big) \big], \quad f, g \in B'. \]
If $X$ and $Y$ are Gaussian, they are independent if and only if $\phi_{(X, Y)}(f, g) = \phi_X(f)\phi_Y(g)$ for all $f, g \in B'$. In that case, $(X, Y)$ is a Gaussian field on $B \times B$.

Before we move on to the topic of abstract Wiener spaces, we include one important result on the integrability of centred Gaussian fields. The following theorem implies that a Gaussian random field in fact belongs to $L^p(B)$, for every $p \in [1, \infty)$. It was shown by Xavier Fernique in 1970 for real Gaussian fields. A proof of the real case can be found in \cite{stroock}. We will sketch this proof, in particular where edits need to be made to include the complex case.

\begin{theorem}[Fernique's Theorem]\label{thm:fernique}
    Let $B$ be a separable Banach space over $\K$ and let $X$ be a centred $B$-valued Gaussian field. Then there exist constants $\alpha, K \in (0, \infty)$ such that
    \[ \E \exp \left( \alpha \norm{X}_B^2 \right) \leq K. \]
\end{theorem}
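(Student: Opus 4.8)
The plan is to prove Fernique's theorem in the complex case by reducing it to the real case, exploiting the fact that a complex random field is built from its real and imaginary parts. Fernique's theorem for real separable Banach spaces is classical, so the essential task is to show that the complex estimate follows from two real estimates.

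First I would treat the real case as known, with the standard proof. The heart of that argument is a recursive tail estimate: one takes two i.i.d.~copies $X$ and $\tilde X$ distributed according to $\nu$ and considers the rotated pair $\frac{X - \tilde X}{\sqrt 2}$ and $\frac{X + \tilde X}{\sqrt 2}$. By the symmetry and independence of centred Gaussians, this rotated pair has the same joint distribution as the original pair $(X, \tilde X)$. Comparing $\norm{X}_B$ and $\norm{\tilde X}_B$ on the events $\{\norm{X}_B \leq r\}$ and $\{\norm{\tilde X}_B > t\}$ via the triangle inequality yields an inequality of the form $\P(\norm X_B > t) \P(\norm X_B \leq r) \leq \P\big(\norm X_B > \frac{t - r}{\sqrt 2}\big)^2$. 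Iterating this recursion drives the tail probability down at a super-Gaussian rate, which after summation gives the exponential moment bound $\int_B \exp(\alpha \norm x_B^2)\d\nu \leq K$ for suitably small $\alpha$.

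Next I would address the complex case. Here the key observation is that the rotation trick is purely metric: it only uses the norm $\norm\cdot_B$, the triangle inequality, and the invariance of the joint law of $(X,\tilde X)$ under the orthogonal rotation by $\pi/4$. For a centred complex Gaussian measure, taking two i.i.d.~copies and forming $\frac{X\pm\tilde X}{\sqrt2}$ again produces a pair with the same joint distribution as $(X,\tilde X)$: for Gaussian fields this can be verified directly by computing characteristic functions and checking that the mean, covariance, and pseudo-covariance match, using the characteristic-function formula for $B$-valued Gaussian fields stated earlier together with the factorisation criterion for independence from \Cref{ex:independent_gaussians}. Since $\frac{X \pm \tilde X}{\sqrt 2}$ are themselves centred complex Gaussian, and a rotation of an i.i.d.~centred Gaussian pair preserves both covariance and the vanishing pseudo-covariance, the invariance goes through. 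With this invariance in hand, the entire tail recursion from the real case applies verbatim, since it never used that the scalar field was $\R$ rather than $\C$.

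The main obstacle I anticipate is not the recursion itself but the verification that the distributional symmetry $\big(\frac{X-\tilde X}{\sqrt2}, \frac{X+\tilde X}{\sqrt2}\big) \sim (X, \tilde X)$ survives in the complex setting. In the real case this is the self-duality of centred Gaussians under orthogonal rotations; in the complex case one must be careful that the \emph{pseudo}-covariance, which has no real analogue, transforms correctly and continues to vanish after rotation. I would handle this by working entirely through characteristic functions: express $\phi_{(X,\tilde X)}$ as a product of two identical factors using independence, apply the linear rotation to the argument, and confirm the resulting characteristic function is unchanged. Once this symmetry is established, the remaining steps—extracting the recursive tail inequality and summing the resulting geometric-type bound to obtain the constants $\alpha$ and $K$—are identical to the real argument and require no further modification.
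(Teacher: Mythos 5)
Your proposal is correct and takes essentially the same route as the paper: the paper likewise reduces the complex case to the distributional symmetry $\big(\tfrac{X+X'}{\sqrt2},\tfrac{X-X'}{\sqrt2}\big) \sim (X,X')$, verifies it by a characteristic-function computation using sesquilinearity of the covariance and bilinearity of the pseudo-covariance, and then defers the purely metric tail recursion to Stroock's real proof. One caution on your wording: a centred complex Gaussian need not be proper, so the pseudo-covariance need not vanish at all — the correct (and sufficient) check, which is what the paper does and what your characteristic-function plan would in fact produce, is that $\Im p(\eta,\eta)+\Im p(\zeta,\zeta)=\Im p(f,f)+\Im p(g,g)$ by bilinearity; the ``continues to vanish after rotation'' clause should be dropped, since it presupposes properness that the theorem does not assume.
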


\begin{proof}[Proof sketch]
    As in \cite{stroock}, let $X'$ be an independent and identically distributed copy of $X$ (possibly on a larger probability space), and define
    \[ Y := \frac{X + X'}{\sqrt 2} \qquad \text{ and } \qquad Y' := \frac{X - X'}{\sqrt 2}. \]
    Fix $f, g \in B'$ and write $\eta := (f+g) / \sqrt2$ and $\zeta := (f-g)/\sqrt 2$, then $\eta, \zeta \in B'$. Now $f(Y) + g(Y') = \eta(X) + \zeta(X')$, so by independence of $X$ and $X'$,
    \begin{align*}
        \phi_{(Y, Y')}(f, g) &= \E \exp \Big[ i \Re \big( f(Y) + g(Y') \big) \Big] = \E \exp \Big[ i \Re \big( \eta(X) + \zeta(X') \big) \Big] \\
        &= \phi_{(X, X')}(\eta, \zeta) = \phi_X(\eta) \phi_{X'}(\zeta).
    \end{align*}
    As $X\sim X'$, they have the same sesquilinear covariance function $c$ and bilinear pseudo-covariance function $p$. By a straightforward calculation
    \begin{align*}
        c(\eta, \eta) + c(\zeta, \zeta) &= c(f, f) + c(g, g), \\
        p(\eta, \eta) + p(\zeta, \zeta) &= p(f, f) + p(g, g),
    \end{align*}
    from which it follows that $\phi_{(Y, Y')} = \phi_{(X, X')}$, and thus $(Y, Y') \sim (X, X')$.  From here the rest of the proof is identical to that given in \cite{stroock}.
\end{proof}

As a result of Fernique's Theorem, we see that any centred Gaussian field is an element of $L^p(B)$, so in particular each centred Gaussian field is Bochner integrable. 

\begin{remark}
If $X$ is a Bochner integrable $B$-valued Gaussian field then there exists some $\mu \in B$ such that $f(\mu) = \E[f(X)]$ for every $f \in B'$ (see Theorem 3.2.3 in \cite{stroock}). In particular, $X - \mu$ is a centred Gaussian field.

It is not clear a priori how to make an improper Gaussian field proper. In view of \Cref{prop:proper_gaussian} a first requirement would be the existence of a real structure on $B$, see \Cref{ssec:real_struc}. Recall also that in the case of Gaussians, properness is equivalent to circular symmetry. It is not clear whether there exists an invertible operator mimicking a global rotation that makes all components circularly symmetric.
\end{remark}

\subsection{Abstract Wiener spaces}\label{ssec:AWS}

From here on out we will limit our discussion to symmetric $\K$-Gaussian fields. A prime example of such a field is the standard Gaussian field, defined over a Banach space $B$ by the characteristic function
\[ \phi(f) = \exp \left( - \frac1{2\kappa} \norm{f}_{B'}^2 \right), \qquad f \in B'. \]
We will see that a standard Gaussian field on a Hilbert space $H$ has almost surely infinite $H$-norm. As an alternative, we consider the abstract Wiener space $(H, B, \nu)$. Here the Hilbert space $H$ is embedded into a larger Banach space $B$, on which we are able to define a Gaussian measure $\nu$ that looks like it would have, had it been the standard Gaussian measure on $H$. 

\begin{remark}
    Consider a Hilbert space $H$ over $\K$. By the Riesz Representation Theorem, any $f \in H'$ may be identified with some $h_f \in H$ such that $f(x) = (x, h_f)_H$ for all $x \in H$, and $\norm{f}_{H'} = \norm{h_f}_H$. We call $h_f$ the Riesz representative of $f$ in $H$. In most literature $H$ is identified with its dual (if $\K = \R$) or antidual (if $\K = \C$). To keep our language consistent we choose not to do this, and always denote the dual space to $H$ by $H'$.
\end{remark}

In the following definition we will assume that $H$ and $B$ are Hilbert and Banach spaces over $\K$, respectively, such that $H \sub B$ algebraically, densely, and topologically.\footnote{Here we use the language from \cite{grubb}, specifically Chapter 12.4 and the discussion of the Lax-Milgram lemma. This lemma can be of use in light of \Cref{thm:characterisation} and the application in \cref{ssec:fgf}.} This means that $H$ is a linear subspace of $B$, that $H$ lies dense in $B$, and that the $H$-norm is stronger than the $B$-norm. In particular, this implies that any $f \in B'$ restricted to $H$ is a continuous, linear map $f|_H: H \to \K$.

\begin{definition}\label{def:AWS}
    Let $H$ be a separable Hilbert space over $\K$ and let $B$ be a separable Banach space over $\K$. Assume that $H \sub B$ algebraically, densely, and topologically. If $\nu$ is a probability measure on $(B, \B(B))$ with characteristic function
    \[ \phi_\nu: f \mapsto \exp \left( -\frac1{2\kappa} \norm{f}_{H'}^2 \right), \qquad f \in B', \]
    then we say that $(H, B, \nu)$ is an \emph{abstract Wiener space}. 
\end{definition}

Notice that if $X \sim \nu$ with $(H, B, \nu)$ an abstract Wiener space, then $X$ is a symmetric $B$-valued Gaussian field with covariance function $(f, g) \mapsto (f, g)_{H'} = (h_g, h_f)_H$. 

The following lemma shows that while $H$ completely determines the covariance structure (and hence all) of $X$, the space $H$ is in fact a $\nu$-null set. We will give a sketch of the proof.

\begin{lemma}
    Let $(H, B, \nu)$ be an abstract Wiener space with $H$ infinite-dimensional. Then $\nu(H) = 0$. 
\end{lemma}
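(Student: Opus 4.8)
The plan is to realise, inside $H$, an infinite orthonormal sequence whose coordinate functionals come from $B'$, and then to read off the coordinates of a sample $X \sim \nu$ against this sequence. These coordinates will turn out to be i.i.d.\ centred resp.~proper standard Gaussians, so that the sum of their squared moduli diverges almost surely. On the other hand, if $X$ landed in $H$, Bessel's inequality would force that same sum to be finite. Hence $\{X \in H\}$ is contained in a null event and $\nu(H) = 0$.

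The first step is the structural fact that $\iota_H(B')$ is dense in $H$. If some $h \in H$ were orthogonal to $\iota_H(B')$, then for every $f \in B'$ we would have $f(h) = (h, \iota_H f)_H = 0$, and Hahn--Banach forces $h = 0$; hence $\iota_H(B')$ is a dense linear subspace of $H$. Since $H$ is infinite-dimensional, this subspace is infinite-dimensional, so it contains an infinite linearly independent set, and applying Gram--Schmidt (which only takes linear combinations and therefore stays inside the subspace) produces an orthonormal sequence $(e_n)_{n \in \N}$ with $e_n = \iota_H(f_n)$ for some $f_n \in B'$.

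Next I would set $\xi_n := f_n(X)$ for $X \sim \nu$. From the covariance structure of the abstract Wiener space, $\cov\big(f_n(X), f_m(X)\big) = (f_n, f_m)_{H'} = (e_m, e_n)_H = \delta_{nm}$, so the $\xi_n$ are centred and uncorrelated with $\Var \xi_n = 1$. Because $X$ is a centred resp.~proper Gaussian field, the family $(\xi_n)_n$ is jointly Gaussian, and being uncorrelated it is in fact i.i.d.\ standard (real or proper complex) Gaussian; in particular $\E|\xi_n|^2 = 1$. By the strong law of large numbers (equivalently Kolmogorov's three-series theorem), $\sum_n |\xi_n|^2 = \infty$ almost surely. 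On the other hand, for $x \in H$ we have $f_n(x) = (x, e_n)_H$, so Bessel's inequality yields $\sum_n |f_n(x)|^2 \leq \norm{x}_H^2 < \infty$. Therefore $\{X \in H\} \subseteq \{\sum_n |\xi_n|^2 < \infty\}$, and the latter event is null, giving $\nu(H) = 0$.

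I expect the main obstacle to be the second step: since $X$ is only a $B$-valued random element, I may evaluate it only against functionals in $B'$, and not every $H$-orthonormal basis is accessible this way — so the density of $\iota_H(B')$ together with the Gram--Schmidt extraction is what makes the coordinate random variables $\xi_n$ well defined while remaining orthonormal in $H$. A secondary technical point is that the statement $\nu(H) = 0$ presupposes $H \in \B(B)$; this holds because the inclusion $H \hookrightarrow B$ is a continuous injection between Polish spaces, so by the Lusin--Souslin theorem $H$ is a Borel subset of $B$.
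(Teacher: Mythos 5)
Your proof is correct and follows essentially the same route as the paper's: extract from $B'$ a countable family of functionals whose Riesz representatives are orthonormal in $H$, observe that their evaluations at $X \sim \nu$ are i.i.d.\ standard ($\K$-valued) Gaussians whose squared moduli sum to infinity almost surely, and contrast this with the Bessel/Parseval bound $\sum_n |f_n(h)|^2 \leq \norm{h}_H^2 < \infty$ valid for every $h \in H$. The only differences are cosmetic: you get divergence from the strong law of large numbers where the paper computes $\E \exp(-|X_n|^2) = a^{-1} < 1$ and applies independence plus monotone convergence, and you justify $H \in \B(B)$ via the Lusin--Souslin theorem where the paper uses lower semi-continuity of the $H$-norm extended by $+\infty$.
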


\begin{proof}[Proof sketch]
    The proof is based on an argument given in Section 3.1 of \cite{stroock}. First, the $H$-norm is extended to $B$ by setting $\norm{x}_H = \infty$ for $x \in B \setminus H$. Then it is a lower semi-continuous function on $B$, so $H = \{x \in B: \norm{x}_H < \infty\} \in \B(B)$ (see Chapter 7 in \cite{kurdila}). 
    
    With an orthonormal basis $(f_n)_{n \in \N}$ of $H'$ and the random field $X: B \to B, x \mapsto x$ one can construct a sequence of independent standard Gaussian random variables $X_n = f_n(X)$. Here we note that in both the real and complex case one has $\E \exp(-|X_n|^2) < 1$. With this one can show that $\{x \in B: \sum_n |X_n(x)|^2 < \infty\}$ is a $\nu$-null set, and finally that it contains $H$. 
\end{proof}

Suppose now that $\nu$ is a standard Gaussian measure on an infinite-dimensional Hilbert space $H$. Then $(H, H, \nu)$ is an abstract Wiener space, so by the lemma $\nu(H) = 0$. However, we also have $\nu(H) = 1$ because $\nu$ is a probability measure on $H$, which is a contradiction. Hence if $H$ is an infinite-dimensional Hilbert space, there exists no $H$-valued standard Gaussian field.

\begin{remark}
    In general, abstract Wiener spaces are constructed by making use of so-called measurable norms. These were introduced by Gross in \cite{gross_meas}, and in \cite{gross} it was shown that they can be used to construct an abstract Wiener space. Though we will not give a formal definition here, we will explain the concept.
    
    Consider a real, infinite-dimensional Hilbert space $H$. A measurable norm on $H$ is weaker than the $H$-norm, and it satisfies some particular conditions concerning finite-dimensional projections. The idea is then that if $B$ is the Banach space completion of $H$ under the measurable norm, then there exists a countably additive Gaussian measure $\nu$ on $B$ making $(H, B, \nu)$ into an abstract Wiener space. Gross further showed that any norm defined by
    \[ \norm{x} := \sqrt{(Ax, x)_H}, \qquad x \in H \]
    is measurable if $A$ is an injective trace class operator on $H$. This is a very useful result, especially in light of the connection between trace class and Hilbert-Schmidt operators. Note that this theory has only been defined for real spaces.

    An in-depth discussion of measurable norms requires many more prerequisites, so we will leave it here. All our further results on abstract Wiener spaces will be in terms of Gaussian fields as in \cite{stroock}, though the proof referenced in Section \ref{ssec:wiener_measure} relies on measurable norms.
\end{remark}

\subsection{Real structures}\label{ssec:real_struc}

Remember that a complex random variable $Z$ can be written as $X + i Y$, where $X$ and $Y$ are real random variables. The same can be done for a complex Banach space valued random element, provided the Banach space has a so-called real structure.

\begin{definition}
    A \emph{real structure} on a complex vector space $B$ is an anti-linear involution $\sigma: B \to B$. The invariant set under $\sigma$ is called the \emph{real part} $B_\R^\sigma := \{x \in B: \sigma(x) = x\}$ of $B$. The maps
    \[ \Re_\sigma: B \to B_\R^\sigma, \quad x \mapsto \frac{x + \sigma(x)}2; \qquad \Im_\sigma: B \to B_\R^\sigma, \quad x \mapsto \frac{x - \sigma(x)}{2i} \]
    project $B$ onto $B^\sigma_\R$.
\end{definition}

Notice that multiple real structures can exist on the same complex vector space, see \Cref{ex:unbounded_real_structure}. When there is no confusion about which real structure is meant, we leave out the explicit denotion by $\sigma$.

\begin{remark}
    A complex vector space has a real structure if and only if it is the complexification of some real vector space, see Section 18.3 in \cite{gorodentsev}. In this case the real vector space is precisely the real part of the complex vector space. This means that a complex vector space $B$ endowed with a real structure can be written as the vector space direct sum
    \[ B = B_\R \oplus i B_\R \cong B_\R \times B_\R. \qedhere \]
\end{remark}

The above holds for complex vector spaces in general. From now on, we will restrict our focus to complex Banach spaces. If $B$ is a complex Banach space with a real structure, we equip $B_\R$ with the $B$-norm. Then $\br \times \br$ can be equipped with the product topology, so we define the product topology norm on $B$ by\footnote{This norm might be denoted $\norm \cdot_2$, but to avoid confusion with the $L^2(\Omega; B)$ norm we write $\ptnorm\cdot$.}
\[ \ptnorm{z} := \sqrt{\norm{\Re z}_B^2 + \norm{\Im z}_B^2}, \qquad z \in B. \]
We can now compare the norm topology on $B$ with the product topology on $\br \times \br$, i.e., we can see if the $B$-norm and the PT-norm are equivalent on $B$. Notice that if they are equivalent, then the real structure is bounded. As the following example illustrates, this is not always the case.

\begin{example}\label{ex:unbounded_real_structure}
    Let $B$ be an infinite-dimensional complex Banach space and let $\{x_j: j \in \mc J\}$ be a Hamel basis for $B$, where $\mc J$ is a necessarily uncountable index set. Without loss of generality assume that $\norm{x_j}_B = 1$ for all $j \in \mc J$. Consider a countable subset of $\mc J$ and identify it with $\N$, so $\N \sub \mc J$. Define now the map $\sigma: B \to B$ on the Hamel basis by
    \[ \sigma(x_j) = \begin{cases}
        \ell x_{2\ell} & \text{ if } j = 2\ell - 1 \in \N \\
        \frac1\ell x_{2\ell - 1} & \text{ if } j = 2\ell \in \N \\
        x_j & \text{ if } j \in \mc J \setminus \N,
    \end{cases} \]
    and extend it anti-linearly in the sense that
    \[ \sigma(x) = \sigma \left( \sum_{k = 1}^n c_{j_k} x_{j_k} \right) = \sum_{k=1}^n \oolsi{c_{j_k}} \sigma \left(x_{j_k}\right) \]
    for all $x \in B$. Then $\sigma$ is an unbounded real structure on $B$.

    Consider now the example $B = \ell^1(\C)$, the space of absolutely summable sequences in $\C$. This is an infinite-dimensional Banach space, so we can define the unbounded real structure $\sigma$ on $B$ as described above. However, there also exists a bounded real structure on $B$, namely coordinate-wise complex conjugation. Thus not only can multiple real structures exist on the same Banach space, but some of them may be bounded while at the same time others are unbounded. 
\end{example}

We give a couple of equivalent statements about the comparison of the topologies. More results on the comparison of norms can be found in \cite{moslehian}, though that is in the language of complexifications as opposed to real structures. The proof of the following proposition is elementary, so it is omitted.

\begin{proposition}
    Let $B$ be a complex Banach space with a real structure $\sigma$. Then the following are equivalent.
    \begin{enumerate}
        \item The map $\sigma:B \to B$ is bounded.
        \item The projection $\Re: B \to \br$ is continuous.
        \item The map 
        \[ \rho: B \to \br \times \br, \quad z \mapsto (\Re z, \Im z) \]
        is a topological vector space isomorphism.
        \item The $B$-norm and the PT-norm are equivalent on $B$.
    \end{enumerate}
\end{proposition}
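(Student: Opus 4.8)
The plan is to establish the cyclic chain of implications $(1)\Rightarrow(2)\Rightarrow(3)\Rightarrow(4)\Rightarrow(1)$, which is the most economical route to the four-fold equivalence. Throughout I will lean on three elementary identities that follow directly from the definitions of $\Re$, $\Im$ and the anti-linearity of $\sigma$: the reconstruction $z = \Re z + i\,\Im z$, the formula $\sigma z = \Re z - i\,\Im z$, and the relation $\Im z = \Re(-iz)$ (which uses $\sigma(-iz) = i\,\sigma z$). These let me move freely between $\sigma$, the two projections, and the map $\rho$, and I regard verifying their signs as routine bookkeeping once anti-linearity is tracked carefully.

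Three of the four implications are then short. For $(1)\Rightarrow(2)$, boundedness of $\sigma$ makes $\Re = \tfrac12(\operatorname{id} + \sigma)$ a sum of bounded operators, hence continuous. For $(4)\Rightarrow(1)$, if the norms are equivalent, say $\ptnorm z \leq C\,\norm z_B$, then $\sigma z = \Re z - i\,\Im z$ gives $\norm{\sigma z}_B \leq \norm{\Re z}_B + \norm{\Im z}_B \leq \sqrt2\,\ptnorm z \leq \sqrt2\,C\,\norm z_B$, so $\sigma$ is bounded. For $(3)\Rightarrow(4)$, I note that $\ptnorm z$ is exactly the Euclidean product norm of $\rho(z) = (\Re z, \Im z)$ in $\br \times \br$; if $\rho$ is a linear homeomorphism, then boundedness of $\rho$ and of $\rho^{-1}$ furnishes two-sided estimates between $\ptnorm z$ and $\norm z_B$, which is precisely their equivalence.

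The implication $(2)\Rightarrow(3)$ is the heart of the argument and the step I expect to require the most care. First, continuity of $\Re$ forces continuity of $\Im$ through $\Im = \Re\circ(-i\,\operatorname{id})$, so $\rho = (\Re,\Im)$ is continuous into the product. Next I check that $\rho$ is a real-linear bijection: injectivity is the reconstruction identity, and surjectivity follows because for $a,b \in \br$ one has $\sigma(a+ib) = a - ib$ and hence $\rho(a+ib) = (a,b)$. The crucial observation, and what lets me avoid invoking the open mapping theorem or any completeness hypothesis on $\br$, is that the inverse $\rho^{-1}\colon (a,b) \mapsto a + ib$ is \emph{automatically} bounded, since $\norm{a+ib}_B \leq \norm a_B + \norm b_B \leq \sqrt2\,\sqrt{\norm a_B^2 + \norm b_B^2}$, independently of $\sigma$. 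Thus once $\rho$ is continuous it is a topological vector space isomorphism. The main obstacle is therefore not any deep functional-analytic input but rather correctly marshalling the anti-linear scalar identities so that continuity of the single map $\Re$ propagates to $\Im$, to $\rho$, and to the norm equivalence.
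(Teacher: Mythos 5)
Your proposal is correct and takes essentially the same route as the paper: the identical cyclic chain $(1)\Rightarrow(2)\Rightarrow(3)\Rightarrow(4)\Rightarrow(1)$, with the key step $(2)\Rightarrow(3)$ handled exactly as in the paper, namely continuity of $\Im = \Re \circ (-i\,\operatorname{id})$ together with the observation that the inverse map $(x,y) \mapsto x + iy$ is automatically continuous, so no open mapping theorem is needed. The only cosmetic difference is in $(4)\Rightarrow(1)$, where the paper exploits that $\sigma$ is an isometry for the PT-norm ($\ptnorm{\sigma(z)} = \ptnorm{z}$) while you use $\sigma(z) = \Re z - i\,\Im z$ and the triangle inequality; both arguments are equally valid and equally short.
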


Observe that a bounded real structure is \emph{isometric} with respect to the PT-norm. This is a useful property, especially in the consideration of Hilbert spaces. Indeed, if $\sigma$ is an isometric real structure on a complex Hilbert space $H$, then for all $x, y \in \hr$,
\[ (x, y)_H = \frac14 \sum_{k=0}^3 i^k \norm{x + i^k y}_H^2 = \frac14 \left( \norm{x + y}_H^2 - \norm{x-y}_H^2 \right). \]
This implies that $\hr$ is a real Hilbert space. 

One might think that given a complex Banach space $B$, the dual space $B'$ is immediately endowed with the real structure that is pointwise complex conjugation. However, this is not the case, since $\bar f$ for $f \in B'$ is complex anti-linear, so in general $\bar f \notin B'$. 

If $B$ is equipped with a bounded real structure $\sigma$, then one can show that $f \mapsto \olsi{f \circ \sigma}$ is a bounded real structure on $B'$. Furthermore, $\brp = (B_\R)' = (B')_\R$ can be thought of as the real part of $B'$ by way of complex linear extension.

\begin{remark}
    Suppose that $B$ is a complex Banach space with a bounded real structure, and fix $h \in B'$. Then $f:= \Re{}\hspace{-1pt} \circ h$ and $g:= \Im{}\hspace{-1pt} \circ h$ are continuous and real linear as maps $\br \to \R$, and we can write $h = f + i g$. However, the map
    \[ \br \times \br \to \R^2, \quad (x, y) \mapsto \big(f(x + i y), g(x + i y)\big) \]
    is continuous, but not necessarily real linear. 
\end{remark}

If $H$ is a complex Hilbert space with an isometric real structure $\sigma$, we have for any $f \in H'$ that $\Re h_f$ is equal to $h_{\Re f}$, the Riesz representative of $\Re f$ in $H$, and so
\[ \norm{\Re f}_{H'}^2 + \norm{\Im f}_{H'}^2 = \norm{\Re h_f}_H^2 + \norm{\Im h_f}_H^2 = \norm{h_f}_H^2 = \norm{f}_{H'}^2. \]
Thus the induced real structure $f \mapsto \overline{f \circ \sigma}$ on $H'$ is in fact also isometric.

Suppose now that $B$ is a complex Banach space with a bounded real structure and that $Z$ is a $B$-valued random field. Then the projections $\Re,$ $\Im: B \to \br$ are continuous and hence measurable, so $X:= \Re Z$ and $Y:= \Im Z$ are $\br$-valued random fields. For the converse, note that the map $\br \times \br \mapsto \C, (x, y) \mapsto (x + iy)$ is continuous and thus measurable, so if $X$ and $Y$ are random fields on a real Banach space $B_\R$, then $Z := X + i Y$ is a random field on the complex Banach space $B:= \br \oplus i \br$ endowed with the PT-norm.

From the fact that the real and imaginary parts of a complex Gaussian random variable are real jointly Gaussian random variables, we can conclude the following result for Gaussian fields.

\begin{lemma}
    Let $B$ be a complex Banach space with a bounded real structure. Suppose that $Z$ is a $B$-valued random field and write $X := \Re Z$ and $Y := \Im Z$.
    \begin{itemize}
        \item If $Z$ is a $B$-valued Gaussian field, then $X$ and $Y$ are $\br$-valued Gaussian fields.
        \item If $X$ and $Y$ are independent $\br$-valued Gaussian fields, then $Z$ is a $B$-valued Gaussian field.
    \end{itemize}
\end{lemma}

\section{Results}\label{sec:results}

In this section we will present our results. Theorems \ref{thm:characterisation} and \ref{thm:equivalence} are stated in general terms of $\K \in \{\R,\C\}$, whereas Theorem \ref{thm:independence} concerns the relation between real and complex abstract Wiener spaces.

Let $H$ be an infinite-dimensional, separable Hilbert space. The space of symmetric $H$-valued Gaussian fields can be completely characterised, because such fields are completely determined by their covariance functions. The following theorem is a more detailed version of Theorem 3.2.8 in \cite{stroock}, and it is extended to hold in general for $\K$-Gaussian fields.

\begin{theorem}\label{thm:characterisation}
    Let $H$ be an infinite-dimensional, separable Hilbert space over $\K$. 
    \begin{enumerate}[label = (\roman*)]
        \item \label{thm:characterisation:uniqueness}
        If $X$ is a symmetric $H$-valued Gaussian field, then its covariance function is given by $(f, g) \mapsto (Af, g)_{H'}$ for some non-negative, self-adjoint, trace class operator $A$ on $H'$. 
        \item \label{thm:characterisation:existence}
        If $A$ is a non-negative, self-adjoint, trace class operator on $H'$, then there exists a symmetric $H$-valued Gaussian field $X$ with covariance function $(f, g) \mapsto (Af, g)_{H'}$. 
    \end{enumerate}
    In either case, $X$ is non-degenerate if and only if $A$ is positive.
\end{theorem}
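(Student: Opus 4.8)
The plan is to prove the two implications \ref{thm:characterisation:uniqueness} and \ref{thm:characterisation:existence} separately and then to read off the non-degeneracy equivalence from the identity $c_X(f,f) = (Af,f)_{H'}$. Throughout I would work through the anti-linear Riesz isomorphism $\iota_H: H' \to H$, using that $(f,g)_{H'} = (\iota_H g, \iota_H f)_H$, so that an orthonormal basis $(f_n)$ of $H'$ corresponds to an orthonormal basis $h_n := \iota_H f_n$ of $H$ with $f_n(x) = (x, h_n)_H$.

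For \ref{thm:characterisation:uniqueness}, I would first note that $c_X$ is a bounded sesquilinear form on $H'$: since $X$ is centred, $\Var(f(X)) = \E|f(X)|^2 = \E|(X, \iota_H f)_H|^2 \le \|f\|_{H'}^2\, \E\|X\|_H^2$, and $\E\|X\|_H^2 < \infty$ by Fernique's Theorem \ref{thm:fernique}; combined with the Cauchy--Schwarz inequality for covariances this bounds $|c_X(f,g)|$, so there is a unique bounded $A$ on $H'$ with $c_X(f,g) = (Af,g)_{H'}$. Self-adjointness follows from the Hermitian symmetry $c_X(f,g) = \overline{c_X(g,f)}$ of covariance, and non-negativity from $(Af,f)_{H'} = \Var(f(X)) \ge 0$. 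The decisive step is the trace-class property, obtained by computing the trace in the basis $(f_n)$:
\[ \trace A = \sum_n (Af_n, f_n)_{H'} = \sum_n \E|f_n(X)|^2 = \E \sum_n |(X, h_n)_H|^2 = \E \|X\|_H^2 < \infty, \]
where the sum and expectation are exchanged by Tonelli (the terms are non-negative), Parseval is applied in $H$, and finiteness is once more Fernique. I expect this interchange-and-Parseval step, together with keeping track of the conjugations introduced by $\iota_H$, to be the main obstacle.

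For \ref{thm:characterisation:existence}, I would diagonalise $A$: being non-negative, self-adjoint and trace class, it is compact, so the spectral theorem provides an orthonormal eigenbasis $(f_n)$ of $H'$ with $Af_n = \lambda_n f_n$, $\lambda_n \ge 0$, and $\sum_n \lambda_n = \trace A < \infty$. Writing $h_n := \iota_H f_n$ I would set
\[ X := \sum_n \sqrt{\lambda_n}\, \xi_n\, h_n, \]
with $(\xi_n)$ i.i.d.~centred resp.~proper $\K$-Gaussian variables of unit variance. The partial sums are Cauchy in $L^2(H)$ because $\E\|X\|_H^2 = \sum_n \lambda_n\, \E|\xi_n|^2 = \sum_n \lambda_n < \infty$, so $X$ is a well-defined $H$-valued field; each $f(X) = \sum_n \sqrt{\lambda_n}\, \xi_n (f, f_n)_{H'}$ is an $L^2$-limit of independent Gaussians and hence Gaussian. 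A second-moment computation using $\E[\xi_n \overline{\xi_m}] = \delta_{nm}$ gives $c_X(f,g) = \sum_n \lambda_n (f, f_n)_{H'}\, \overline{(g, f_n)_{H'}} = (Af, g)_{H'}$, while centredness is clear and, in the complex case, $\E[\xi_n \xi_m] = 0$ forces $p_X \equiv 0$ so that $X$ is proper.

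Finally, since $c_X(f,f) = (Af,f)_{H'}$, the field $X$ is non-degenerate, i.e.~$c_X(f,f) > 0$ for all $f \ne 0$, if and only if $(Af,f)_{H'} > 0$ for all $f \ne 0$, which is exactly positivity of $A$.
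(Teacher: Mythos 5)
Your proposal is correct and follows essentially the same route as the paper: Fernique plus the representation of bounded non-negative sesquilinear forms and a basis computation of the trace for part \ref{thm:characterisation:uniqueness}, and spectral diagonalisation of $A$ with an $L^2(H)$-Cauchy series of i.i.d.~standard $\K$-Gaussians for part \ref{thm:characterisation:existence}. The only cosmetic difference is that the paper packages the ``$L^2$-limits of Gaussian fields are Gaussian with the limiting covariance'' step into a separate lemma (\Cref{lem:conserved_gaussianness}), whereas you argue it scalar-wise via $f(X)$ being an $L^2$-limit of Gaussian random variables, which is equally valid.
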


The next theorem describes an equivalence between symmetric Gaussian fields and $\K$-abstract Wiener spaces. It extends Theorem 3.3.2 in \cite{stroock} to hold in general for $\K$-vector spaces. Part 1 is restated to highlight the non-uniqueness of the Banach space, and the relation with \Cref{thm:characterisation}.

\begin{theorem}\label{thm:equivalence}
    \leavevmode
    \begin{enumerate}[label = (\roman*)]
        \item \label{thm:equivalence:operator}
        Let $H$ be a separable, infinite-dimensional Hilbert space over $\K$. For any positive, self-adjoint, Hilbert-Schmidt operator $A$ on $H'$, there exists a separable, infinite-dimensional Banach space $B$ over $\K$ and a probability measure $\nu$ on $(B, \B(B))$ such that $(H, B, \nu)$ is an abstract Wiener space.
        \item \label{thm:equivalence:measure}
        Let $B$ be a separable, infinite-dimensional Banach space over $\K$ with $\nu$ a probability measure on $(B, \B(B))$. Then $\nu$ is a symmetric Gaussian measure if and only if there is a separable, infinite-dimensional Hilbert space $H$ over $\K$ for which $(H, B, \nu)$ is an abstract Wiener space. In either case, $H$ is unique.
    \end{enumerate}
\end{theorem}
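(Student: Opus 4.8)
The plan is to treat the two parts separately, since they run in opposite directions: part~(i) manufactures a measure out of the geometry of $H$, while part~(ii) recovers that geometry from a measure. The reverse implication of part~(ii) is immediate: if $(H, B, \nu)$ is an abstract Wiener space then $\phi_\nu(f) = \exp(-\tfrac12 \norm{f}_{H'}^2)$, which is exactly the characteristic function of a centred resp.~proper $\K$-Gaussian field, so $\nu$ is a centred resp.~proper Gaussian measure. The content therefore lies in the two explicit constructions and in uniqueness.

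For part~(i) I would reconstruct the measure directly as a random series, which sidesteps Gross's measurable-norm machinery and treats $\K \in \{\R, \C\}$ uniformly. Diagonalise $A$: choose an orthonormal eigenbasis $(f_n)_{n \in \N}$ of $H'$ with $A f_n = \mu_n f_n$, where $\mu_n > 0$ by positivity and $\sum_n \mu_n^2 < \infty$ since $A$ is Hilbert--Schmidt, and let $h_n := \iota_H(f_n)$ be the induced orthonormal basis of $H$. Define $B$ to be the completion of $H$ under the weaker norm $\norm{h}_B^2 := \sum_n \mu_n^2 \orm{(h, h_n)_H}^2$; this makes $B$ a separable, infinite-dimensional Hilbert (hence Banach) space into which $H \sub B$ embeds algebraically, densely, injectively (every $\mu_n > 0$) and topologically (as $\sup_n \mu_n < \infty$). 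Taking $(\xi_n)_{n \in \N}$ i.i.d.~centred resp.~proper standard $\K$-Gaussians, the partial sums of $\sum_n \xi_n h_n$ are Cauchy in $L^2(B)$ because $\E \norm{\sum_{M < n \leq N} \xi_n h_n}_B^2 = \sum_{M < n \leq N} \mu_n^2 \to 0$, and I let $\nu$ be the law of the resulting $L^2(B)$-limit $S$. Since $f(S)$ is an $L^2$-limit of centred resp.~proper Gaussians it is one itself, with $\Var f(S) = \sum_n \orm{f(h_n)}^2 = \norm{\iota_H(f|_H)}_H^2 = \norm{f}_{H'}^2$, so $\phi_\nu(f) = \exp(-\tfrac12 \norm{f}_{H'}^2)$ and $(H, B, \nu)$ is an abstract Wiener space. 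The operator $A$ enters only through the $B$-norm, which transparently accounts for the non-uniqueness of $B$.

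The forward implication of part~(ii) is the construction of the Cameron--Martin space of $\nu$. Let $X \sim \nu$; by \Cref{thm:fernique} we have $X \in L^2(B)$, so $R : B' \to B$, $Rf := \E[\,\overline{f(X)}\, X\,]$, is a well-defined Bochner integral with $g(Rf) = c_X(g, f)$. Let $\H \sub L^2(\Omega; \K)$ be the closed subspace spanned by $\{f(X) : f \in B'\}$ (separable because $B$ is), equipped with $(f(X), g(X))_\H = c_X(f, g)$; extend $R$ by $R\zeta := \E[\,\overline{\zeta}\, X\,]$ and put $H := R(\H)$, transporting the inner product so that $R : \H \to H$ is an isometry. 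If $R\zeta = 0$ then $\zeta \perp \{g(X)\}$, hence $\zeta = 0$, so $R$ is injective and $H$ is a separable Hilbert space; moreover $\norm{R\zeta}_B \leq \E[\orm{\zeta}\,\norm{X}_B] \leq \norm{\zeta}_\H \norm{X}_{L^2(B)}$, so $H \sub B$ topologically. Finally $f(R\zeta) = (f(X), \zeta)_\H$ identifies $f|_H$ under the isometry with the functional $(f(X), \cdot)_\H$, of norm $\norm{f(X)}_\H = \sqrt{c_X(f, f)}$; thus $\norm{f}_{H'}^2 = c_X(f, f)$ and $\phi_\nu(f) = \exp(-\tfrac12 \norm{f}_{H'}^2)$, the abstract Wiener characteristic function.

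For uniqueness, let $(H_1, B, \nu)$ and $(H_2, B, \nu)$ both be abstract Wiener spaces. Equality of characteristic functions gives $\norm{f}_{H_1'} = \norm{f}_{H_2'}$ for all $f \in B'$, and for $f, g \in B'$ one has $g(\iota_{H_i} f) = (g, f)_{H_i'} = c_X(g, f)$, a quantity intrinsic to $\nu$; since $B'$ separates points of $B$, this forces $\iota_{H_1} f = \iota_{H_2} f$ in $B$. As $\iota_{H_i}(B')$ is dense in $H_i$ and the two Hilbert norms agree on it, the completions coincide inside $B$, so $H_1 = H_2$. The step I expect to be most delicate is \emph{density of $H$ in $B$} in part~(ii): the Cameron--Martin construction only makes $H$ dense in the topological support of $\nu$, so the statement as phrased requires $\nu$ to be non-degenerate, i.e.\ of full support (equivalently $f(X) \neq 0$ almost surely for every $f \in B' \setminus \{0\}$); a centred Gaussian concentrated on a proper closed subspace admits no abstract Wiener space with that $B$, whereas under full support density holds and $H$ is automatically infinite-dimensional. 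The remaining points -- separability, the Bochner estimates, and the bookkeeping between the centred-real and proper-complex formulas -- are routine given \Cref{thm:fernique} and Section~\ref{ssec:gaussian_fields}.
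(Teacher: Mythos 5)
Your proposal is correct, and it is in fact more complete than the paper's own treatment, which proves only part (i) in detail and dispatches part (ii) with a pointer to Theorem 3.3.2 of \cite{stroock} plus ``appropriate changes'' for the complex case. On part (i) you and the paper take essentially the same route in different coordinates: the paper sets $V := R(A)$ with $(u, v)_V := (A^{-1}u, A^{-1}v)_{H'}$, takes $B := V'$ (a Hilbert space), checks that $A$ is Hilbert--Schmidt on $B'$ so that $A^2 = A^*A$ is trace class there, and invokes \Cref{thm:characterisation}(ii); you diagonalise $A$ and complete $H$ under $\lownorm{h}_B^2 = \sum_n \mu_n^2 \orm{(h, h_n)_H}^2$ --- which on $H$ is exactly the norm $h \mapsto \lownorm{A\iota_H^{-1}h}_{H'}$, hence the same Hilbertian enlargement --- and then rerun the $L^2(B)$ random-series argument inline rather than citing \Cref{thm:characterisation}(ii). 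Both proofs hinge on the identical point: $B$ may be taken Hilbert, chosen so that the would-be standard Gaussian of $H$ acquires a trace-class covariance on $B$. On part (ii), your construction ($Rf = \E[\,\overline{f(X)}X\,]$ as a Bochner integral, $\H$ the closed span of $\{f(X) : f \in B'\}$ in $L^2$, $H := R(\H)$ with the transported inner product, plus the Hahn--Banach uniqueness argument) is the standard Cameron--Martin argument underlying the result the paper cites; it is correct, and in supplying it you have effectively written out the proof the paper omits.

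Your closing caveat is not a defect of your proof but a genuine observation about the statement itself: the forward implication of (ii) fails as literally phrased unless $\nu$ is non-degenerate. Under the paper's definitions degenerate Gaussians are permitted (non-degeneracy is introduced as a separate property of a field), so for instance $\nu = \delta_0$, or any centred Gaussian measure charging only a proper closed subspace of $B$, counts as a centred resp.~proper Gaussian measure; yet if some $f \in B' \setminus \{0\}$ vanishes $\nu$-almost surely, then any candidate $H$ must satisfy $\norm{f}_{H'} = 0$, forcing $H \sub \ker f$, so $H$ cannot be dense in $B$ and no abstract Wiener space with this $B$ exists. Hence ``centred resp.~proper Gaussian'' must be strengthened to ``non-degenerate centred resp.~proper Gaussian'' (equivalently, $\nu$ of full support) for the forward direction --- exactly the hypothesis under which your density argument closes and under which $H$, being dense in the infinite-dimensional $B$, is automatically infinite-dimensional. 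You are right to flag this; the paper's statement, and its appeal to \cite{stroock} where such a hypothesis is operative, leave it implicit.
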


Finally, we have a result on the relation between real and complex abstract Wiener spaces, provided that the complex Banach and Hilbert spaces share an isometric real structure. This theorem is the most original result of our paper.

\begin{theorem}\label{thm:independence}
    Let $B$ be a complex Banach space and $H \sub B$ a complex Hilbert space. Assume that there exists a bounded real structure on $B$ such that its restriction to $H$ is an isometric real structure on $H$. Suppose that $Z$ is a $B$-valued random field and write 
    \[ X := \sqrt 2 \Re Z, \quad \text{ and } Y := \sqrt 2 \Im Z. \]
    Then the following are equivalent.
    \begin{enumerate}[label = (\roman*)]
        \item \label{thm:independence:complex} $Z$ is distributed by $\nu$ with $(H, B, \nu)$ a complex abstract Wiener space.
        \item \label{thm:independence:real} $X$ and $Y$ are independent and identically distributed by $\nu_\R$ with $(\hr, \br, \nu_\R)$ a real abstract Wiener space.
    \end{enumerate}
\end{theorem}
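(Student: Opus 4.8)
The plan is to prove both implications by computing characteristic functions, reducing everything to one-dimensional facts about proper complex and centred real Gaussian variables together with a single norm identity. Throughout I would use $Z = (X + iY)/\sqrt2$ and the fact that every $h \in B'$ decomposes uniquely as $h = f + i g$ with $f, g \in \brp$ (the decomposition lemma), extended complex-linearly. For such $f, g$ and for $X, Y$ taking values in $\br$ one has $f(X), g(X), f(Y), g(Y) \in \R$, and a direct expansion gives
\[ h(Z) = \tfrac{1}{\sqrt2}\big[(f(X) - g(Y)) + i\,(g(X) + f(Y))\big], \]
so that the real and imaginary parts of $h(Z)$ are explicit real linear combinations of $f(X), g(X), f(Y), g(Y)$. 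This algebraic identity is the bridge between the two sides, and I would use the mirror version $f(X)+g(Y) = \sqrt2\,\Re\big((f - ig)(Z)\big)$ when extracting information about $X, Y$ from $Z$.

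The heart of the argument, and the step I expect to be the main obstacle, is the norm identity
\[ \norm{h}_{H'}^2 = \norm{f}_{\hr'}^2 + \norm{g}_{\hr'}^2, \qquad h = f + i g, \ f, g \in \hr'; \]
call this identity $(\star)$. To prove it I would first note that for a Hilbert space carrying an isometric real structure the PT-norm coincides with the Hilbert norm, since the induced real-orthogonal splitting $H = \hr \dotplus i\hr$ gives $\norm{a + ib}_H^2 = \norm a_H^2 + \norm b_H^2$ for $a, b \in \hr$. Applying this to the dual $H'$, whose isometric real structure $k \mapsto \olsi{k \circ \sigma}$ has real part $\hr'$, yields $\norm h_{H'}^2 = \norm f_{H'}^2 + \norm g_{H'}^2$. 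It then remains to check that $\norm f_{H'} = \norm f_{\hr'}$ for a real functional $f \in \hr'$, which follows from $|f(a+ib)|^2 = f(a)^2 + f(b)^2 \leq \norm f_{\hr'}^2\,\norm{a+ib}_H^2$ together with the reverse inequality coming from $\hr \subseteq H$. Carefully tracking the anti-linearity of the Riesz map is the delicate point here.

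For \ref{thm:independence:complex} $\Rightarrow$ \ref{thm:independence:real} I would argue as follows. Since $Z$ is a proper complex Gaussian field, $\Re Z$ and $\Im Z$, and hence $X, Y$, are $\br$-valued Gaussian fields by the earlier lemma; and for each fixed $f \in \brp$ the variable $f(Z) = (f(X) + i f(Y))/\sqrt2$ is a proper complex Gaussian, so by \Cref{ex:proper_cx_gaussian_rv} its real and imaginary parts $f(X)/\sqrt2$ and $f(Y)/\sqrt2$ are i.i.d.~centred real Gaussians. Using $\Var f(Z) = \norm f_{H'}^2$ and $(\star)$ with $g = 0$ gives $f(X) \sim f(Y) \sim N(0, \norm f_{\hr'}^2)$, so each of $X$ and $Y$ has the real-AWS characteristic function $\exp(-\tfrac12\norm f_{\hr'}^2)$. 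For independence I would compute $\phi_{(X,Y)}(f,g) = \E\exp\big(i(f(X)+g(Y))\big)$; writing $f(X) + g(Y) = \sqrt2\,\Re\big((f - ig)(Z)\big)$ and applying $(\star)$ to $h = f - ig$ shows this factorises as $\phi_X(f)\phi_Y(g)$, whence $X \perp Y$ by \Cref{ex:independent_gaussians}. Finally I would verify the structural conditions: $\br = \{x : \sigma x = x\}$ is a closed, hence complete and separable, real subspace of $B$, and $\hr \subseteq \br$ is dense and topological because $\Re$ is continuous and carries the dense subspace $H$ onto a dense subset of $\br$.

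The converse \ref{thm:independence:real} $\Rightarrow$ \ref{thm:independence:complex} is the mirror computation. Given i.i.d.~centred real Gaussian $X, Y$ forming a real AWS, $Z = (X+iY)/\sqrt2$ is a well-defined $B$-valued Gaussian field by the lemma on $X + iY$. Using independence of $X, Y$ to split the characteristic function,
\[ \phi_Z(h) = \E\exp\!\Big(\tfrac{i}{\sqrt2}(f+g)(X)\Big)\,\E\exp\!\Big(\tfrac{i}{\sqrt2}(f-g)(Y)\Big) = \exp\!\Big(-\tfrac14\big(\norm{f+g}_{\hr'}^2 + \norm{f-g}_{\hr'}^2\big)\Big), \]
the parallelogram law in $\hr'$ together with $(\star)$ collapses the exponent to $-\tfrac12\norm h_{H'}^2$, which is exactly the defining characteristic function of the complex abstract Wiener space $(H,B,\nu)$ with $\nu$ the law of $Z$; properness of $Z$ is then automatic, being read off from this characteristic function. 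The density and topological embedding $H \subseteq B$ transfer from $\hr \subseteq \br$ by the same PT-norm equivalence argument.
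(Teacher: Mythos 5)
Your proposal is correct and follows essentially the same route as the paper's proof: both directions proceed by characteristic-function computations built on the decomposition $h = f + ig$ with $f, g \in \brp$, the fact that a proper complex Gaussian has i.i.d.\ real and imaginary parts, the parallelogram/PT-norm identity on $H'$ (your $(\star)$), and the transfer of the embedding conditions between $H \sub B$ and $\hr \sub \br$. The only differences are presentational: you isolate and actually prove the norm identity $\norm{f + ig}_{H'}^2 = \norm{f}_{\hr'}^2 + \norm{g}_{\hr'}^2$, including the compatibility $\norm{f}_{H'} = \norm{f}_{\hr'}$ for real functionals, which the paper uses implicitly, and your scaling factors (the $1/\sqrt2$ in the second direction) are in fact the correct ones where the paper's displayed computation contains factor-of-$2$ typos.
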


\section{Proofs}\label{sec:proofs}

\subsection{Proof of \texorpdfstring{\Cref{thm:characterisation}}{Theorem 3.1}}

For the proof of part \ref{thm:characterisation:existence} we will need the following lemma. This is a more detailed version of Corollary 3.2.7 in \cite{stroock} extended to hold in general for $\K$-Gaussian fields.

\begin{lemma}\label{lem:conserved_gaussianness}
    Let $B$ be a separable Banach space and let $(X_n)_{n \in \N}$ be a sequence of centred Gaussian fields. If $(X_n)_{n \in \N}$ converges in probability to some centred $B$-valued random field $X$, then also $X_n \to X$ in $L^p(B)$ for all $p \geq 1$. Furthermore, $X$ is Gaussian, and its (pseudo-){\allowbreak}covariance function is the uniform limit of the (pseudo-){\allowbreak}covariance function of $X_n$.
\end{lemma}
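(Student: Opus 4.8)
The plan is to proceed in three stages: first identify the limit $X$ as Gaussian, then upgrade convergence in probability to $L^p$ convergence by means of a \emph{uniform} version of Fernique's Theorem, and finally deduce the uniform convergence of the second-moment functions from $L^2$ convergence.

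For the Gaussianity of $X$, fix $f \in B'$. Convergence in probability $X_n \to X$ gives $f(X_n) \to f(X)$ in probability, hence in distribution. Each $f(X_n)$ is a $\K$-Gaussian random variable, so passing through \Cref{rem:cx_to_real_RV} the real vectors $(\Re f(X_n), \Im f(X_n))$ are two-dimensional real Gaussians converging in distribution to $(\Re f(X), \Im f(X))$. A distributional limit of Gaussians (with a genuine random-variable limit) is again Gaussian, with mean vector and covariance matrix the limits of those of the approximants. Hence $f(X)$ is $\K$-Gaussian for every $f \in B'$, so $X$ is a $B$-valued Gaussian field, centred by hypothesis.

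The crux is the $L^p$ bound. Since $X_n \to X$ in probability and $X$ is a fixed random element, the family of laws $\{\nu_n\}$ is tight, so there is a single radius $r_0$ with $\P(\norm{X_n}_B \le r_0) \ge 3/4$ for all $n$ (the finitely many initial indices are handled separately). The proof of \Cref{thm:fernique} produces the constants $\alpha, K$ as explicit functions of such a tail threshold alone, so the same $\alpha, K$ serve all $\nu_n$ simultaneously, giving $\sup_n \int_B \exp(\alpha \norm{x}_B^2)\, \d \nu_n \le K$. In particular $\sup_n \E\norm{X_n}_B^{q} < \infty$ for every $q \in [1, \infty)$, and by Fatou along an almost surely convergent subsequence also $\E\norm{X}_B^q < \infty$. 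Consequently, for any $p \ge 1$ and any $q > p$, the triangle inequality yields $\sup_n \E\norm{X_n - X}_B^{q} < \infty$, so $\{\norm{X_n - X}_B^p\}_n$ is uniformly integrable. Since $\norm{X_n - X}_B^p \to 0$ in probability by the continuous mapping theorem (\Cref{rem:bochner_convergence}), the Vitali-type criterion in the same remark gives $\E\norm{X_n - X}_B^p \to 0$, i.e.\ $X_n \to X$ in $L^p(B)$. For the uniform convergence of covariances I work in $L^2(B)$: using centredness and the linearity of $f, g$,
\[ c_{X_n}(f,g) - c_X(f,g) = \E\big[ f(X_n - X)\,\overline{g(X_n)} \big] + \E\big[ f(X)\,\overline{g(X_n - X)} \big], \]
and bounding each term by Cauchy--Schwarz together with $|f(x)| \le \norm{f}_{B'}\norm{x}_B$ gives
\[ |c_{X_n}(f,g) - c_X(f,g)| \le \norm{f}_{B'}\norm{g}_{B'}\, \norm{X_n - X}_2 \big( \norm{X_n}_2 + \norm{X}_2 \big). \]
As $\norm{X_n}_2$ is uniformly bounded by the previous step and $\norm{X_n - X}_2 \to 0$, the right-hand side tends to zero uniformly over the unit ball of $B' \times B'$; the identical computation with $g$ in place of $\overline{g}$ handles $p_{X_n}$. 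Thus $c_{X_n} \to c_X$ and $p_{X_n} \to p_X$ uniformly.

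The main obstacle is precisely the \emph{uniform} Fernique estimate: it is not enough that \Cref{thm:fernique} applies to each $\nu_n$ separately, since one needs constants valid for all $n$ at once. This hinges on reading off from the proof of \Cref{thm:fernique} that $\alpha$ and $K$ depend only on a common tail bound, and on extracting that common bound from the tightness of a convergent-in-probability sequence. I take care \emph{not} to apply Fernique's Theorem to the differences $X_n - X$ directly, since these are not assumed jointly Gaussian; the triangle inequality on the individually-controlled $X_n$ and $X$ sidesteps this issue.
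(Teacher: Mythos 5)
Your proof is correct and follows essentially the same route as the paper's: tightness of $(X_n)_{n\in\N}$ (from convergence in probability) yields a uniform Fernique bound $\sup_n \E \exp(\alpha \norm{X_n}_B^2) \leq K$, hence uniform integrability and $L^p$ convergence, and the same Cauchy--Schwarz estimate gives uniform convergence of the (pseudo-)covariance functions. The only difference is one of ordering: you establish Gaussianity of $X$ up front via closure of the Gaussian family under distributional limits of the scalar projections $f(X_n)$, whereas the paper deduces it last by passing to the limit in the characteristic functions, using the already-established covariance convergence.
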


\begin{proof}
    Assume that $X_n \to X$ in probability. Then $X_n \to X$ in distribution also, which means that the sequence $(X_n)_{n \in \N}$ is tight. By the proof of Fernique's Theorem in \cite{stroock} there exist $\alpha, K \in (0, \infty)$ such that
    \[ \E \exp \left( \alpha \norm{X_n}_B^2 \right) < K \]
    for all $n \in \N$. Then for any $p \geq 1$ the collection $\{\lownorm{X_n - X}_B^p: n \in \N\}$ is uniformly integrable, and so $X_n \to X$ in $L^p$. 
    
    Next, for $f, g \in B'$ we have
    \begin{align*}
        \big|c_X(f, g) - c_{X_n}(f, g)\big| &= \Big| \E \Big[ f(X) \big( \overline{g(X) - g(X_n)} \big) + \big( f(X_n) - f(X) \big) \overline{g(X_n)} \Big] \Big| \\
        &\leq \norm{f}_{B'} \norm{g}_{B'} \norm{X - X_n}_2 \big( \norm X_2 + \norm{X_n}_2 \big)
    \end{align*}
    by the Cauchy-Schwarz inequality in $L^2(B)$. Since $X_n \to X$ in $L^2$ the sequence $(X_n)_{n \in \N}$ must be bounded in $L^2(B)$, so $c_{X_n}$ converges uniformly to $c_X$. The same argument can be made for the pseudo-covariance functions. 
    
    Finally, fix $f \in B'$, then $f(X_n) \to f(X)$ in distribution by the continuous mapping theorem. This implies a convergence of characteristic functions, so
    \begin{align*}
        \phi_X(f) &= \phi_{f(X)}(1) = \lim_{n \to \infty} \phi_{f(X_n)}(1) \\
        &= \lim_{n \to \infty} \exp \big( -\tfrac12 c_{X_n}(f, f) + \tfrac12 \Im p_{X_n}(f, f) \big) \\
        &= \exp \big( -\tfrac12 c_X(f, f) + \tfrac12 \Im p_X(f, f) \big).
    \end{align*}
    The distribution of a random field is completely determined by its characteristic function, so we conclude that $X$ is a centred $B$-valued Gaussian field.
\end{proof}

We can now prove \Cref{thm:characterisation}. This theorem is extended from Theorem 3.2.8 in \cite{stroock} to include the complex case. We follow the proof given there.

\begin{proof}[Proof of part \ref{thm:characterisation:uniqueness}]\let\qed\relax
    Let $X$ be a symmetric $H$-valued Gaussian field. Recall from \Cref{ex:gaussian_field} that its characteristic function $\phi_X$ is determined only by its covariance function $c_X$, in the form
    \[ \phi_X(f) = \exp \big( -\tfrac1{2\kappa} c_X(f, f) \big), \quad f\in H'. \]
    By Fernique's Theorem \ref{thm:fernique},
    \begin{align*} |c_X(f, g)| &\leq \E \Big[ \big|(X, h_f)_H \overline{(X, h_g)_H}\big| \Big] \leq \norm{h_f}_H \norm{h_f}_H \E \Big[ \norm{X}_H^2 \Big] \\
    &= \norm{f}_{H'} \norm{g}_{H'} \norm{X}_2^2 < \infty
    \end{align*}
    for all $f, g \in H'$, so $c_X$ is bounded. By definition, $c_X$ is a symmetric, non-negative definite sesquilinear form $H' \times H' \to \K$. There now exists a bounded, self-adjoint, non-negative linear operator $A: H' \to H'$ such that 
    \[ c_X(f, g) = (Af, g)_H \quad \text{ for all } \quad f, g \in H'. \]
    Left is to show that $A$ is trace class. As $H$ is an infinite-dimensional, separable Hilbert space, there exists an orthonormal basis $(x_n)_{n \in \N}$ of $H$. Denote by $(f_n)_{n \in \N}$ the associated basis of $H'$, so $x_n$ is the Riesz representative of $f_n$ for each $n \in \N$. We may now calculate by monotone convergence
    \begin{align*}
        \trace A &= \sum_{n=1}^\infty (Af_n, f_n)_{H'} = \sum_{n=1}^\infty c_X(f_n, f_n) = \sum_{n = 1}^\infty \E \Big[ \big| f_n(X) \big|^2 \Big] \\
        &= \E \Bigg[ \sum_{n = 1}^\infty \big| (X, x_n)_H \big|^2 \Bigg] = \E \norm{X}_H^2 = \norm{X}_2^2 < \infty,
    \end{align*}
    so $A$ is indeed trace class.
\end{proof}

\begin{proof}[Proof of part \ref{thm:characterisation:existence}]\let\qed\relax
    Let $A: H' \to H'$ be a non-negative, self-adjoint, trace class operator. In particular, $A$ is compact, so by the Hilbert-Schmidt Spectral Theorem there exists a sequence of eigenfunctions $(f_n)_{n \in \N}$ of $A$ that forms an orthonormal basis of $H'$. Furthermore, let $(x_n)_{n \in \N}$ be the associated basis of $H$, so $x_n$ is the Riesz representative of $f_n$ for every $n \in \N$. Finally, the corresponding sequence of eigenvalues $(\alpha_n)_{n \in \N}$ is real, and even non-negative by the non-negativity of $A$.
    
    Let $(X_n)_{n \in \N}$ be a sequence of independent standard $\K$-valued Gaussian random variables.\footnote{Symmetric Gaussians with variance one.} Define for each $m \in \N$ the $H$-valued random field
    \[ S_m := \sum_{n=1}^m \alpha_n^{\frac12} X_n x_n, \]
    By independence of the $X_n$,
    \begin{align*}
        \phi_{S_m}(f) = \E \exp \left( i \sum_{n=1}^m \alpha_n^{\frac12} X_n (f, f_n)_{H'} \right) = \exp \left( - \frac12 \sum_{n=1}^m \alpha_n \big|(f, f_n)_{H'}\big|^2 \right)
    \end{align*}
    for any $m \in \N$ and $f \in H'$. Furthermore, 
    \begin{align*} 
    c_{S_m}(f, g) &= \sum_{n=1}^m \sum_{k=1}^m \alpha_n^\frac12 \alpha_k^{\frac12} \E \Big[ X_n \overline{X_k} \Big] (f, f_n)_{H'} \overline{(g, f_k)_{H'}} \\
    &= \sum_{n=1}^m \alpha_n (f, f_n)_{H'} \overline{(g, f_n)_{H'}}
    \end{align*}
    for any $m \in \N$ and $f, g \in H'$. This implies that $(S_m)_{m \in \N}$ is a sequence of Gaussian fields. Notice now that for $k < m$, 
    \[ \norm{S_m - S_k}_2^2 = \E \sum_{n=k + 1}^m \alpha_n |X_n|^2 = \sum_{n=k + 1}^m \alpha_n \leq \sum_{n > k} \alpha_n. \]
    As $A$ is trace class, the sum $\sum_n \alpha_n$ converges and so the tail goes to zero. Thus $(S_m)_{m \in \N}$ is a Cauchy sequence in the Banach space $L^2(H)$, so there is a limit $S \in L^2(H)$. By \Cref{lem:conserved_gaussianness} we see that $S$ is a symmetric $H$-valued Gaussian field with covariance function
    \[ c_S(f, g) = \lim_{m \to \infty} c_{S_m}(f, g) = \sum_{n=1}^\infty \alpha_n (f, f_n)_{H'} \overline{(g, f_n)_{H'}} = (Af, g)_{H'}, \]
    for all $f, g \in H'$, i.e., a covariance function determined by $A$.
\end{proof}

The final statement of the theorem is an immediate result from the form of the covariance function in this set-up. \qed

\subsection{Proof of \texorpdfstring{\Cref{thm:equivalence}}{Theorem 3.2}}

\begin{proof}[Proof of part \ref{thm:equivalence:operator}]\let\qed\relax
    First we need to find some Banach space $B$ that contains $H$ algebraically, densely, and topologically. Then we need to show that there exists a proper $B$-valued Gaussian field with covariance function given by the $H'$-inner product. We will define $B$ by setting its dual to be the image of $A$ in $H'$. Then $B$ will turn out to be a Hilbert space, so we may apply Theorem \hyperref[thm:characterisation:existence]{3.1(ii)} to $A$ as an operator on $B'$.

    By its positivity, $A$ is a bijection onto $R(A)$, so $A^{-1}: R(A) \to H'$ is well defined. If we set $V = R(A)$ and 
    \[ (u, v)_V = \big( A^{-1}u, A^{-1} v \big)_{H'}, \qquad u, v \in V,  \]
    then $V$ is an inner product space. Suppose that $(u_n)_{n \in \N}$ is a Cauchy sequence in $V$, then $(A^{-1}u_n)_{n \in \N}$ is a Cauchy sequence in $H'$. Thus there exists some $f \in H'$ such that
    \[ \lim_{n \to \infty} \norm{A f - u_n}_V = \lim_{n \to \infty} \norm{f - A^{-1}u_n}_{H'} = 0, \]
    so $V$ is in fact a Hilbert space. Furthermore,
    \[ \norm{u}_{H'} = \norm{A A^{-1} u}_{H'} \leq \opnorm{A} \norm{A^{-1} u}_{H'} = \opnorm{A} \norm{u}_V \]
    for any $u \in V$, so $V \sub H'$ algebraically and topologically. Set $B := V'$, then $B$ is a Hilbert space and $H \sub B$ algebraically and topologically. From now we will write $B'$ as opposed to $V$.

    Suppose that $H$ is not dense in $B$. By the Hahn-Banach Theorem there must exist some $u \in B' \setminus \{0\}$ such that $u(h) = 0$ for all $h \in H$. However, then $(A^{-1}u)(h) = 0$ for all $h \in H$, which contradicts the fact that $u \neq 0$. Thus we may conclude that $H$ lies dense in $B$.

    Consider now $A$ as an operator on $B'$. For $u, v \in B'$ there exist $f, g \in H'$ such that $u = Af$ and $v = Ag$. Hence
    \[ (Au, v)_{B'} = (u, A^{-1} v)_{H'} = (Af, g)_{H'} = (f, Ag)_{H'} = (u, Av)_{H'}, \]
    from which it follows that $A$ is self-adjoint and positive on $B'$. 
    
    Let $(u_n)_{n \in \N}$ be an orthonormal basis for $B'$ and set $f_n := A^{-1} u_n$ for $n \in \N$, then $(f_n)_{n \in \N}$ is an orthonormal system in $H'$. For $f \in H$ we have
    \[ f = A^{-1} Af = A^{-1} \sum_{n \in \N} u_n (Af, u_n)_{B'} = \sum_{n \in \N} f_n(f, f_n)_{H'} \]
    by continuity and linearity of $A$ and $A^{-1}$. This means that $(f_n)_{n \in \N}$ is in fact an orthonormal basis for $H'$. Furthermore, $\norm{A u_n}_{B'} = \norm{A f_n}_{H'}$, so $A$ has the same Hilbert-Schmidt norm on $H'$ as it does on $B'$. This implies that $A$ is a Hilbert-Schmidt operator on $B'$ as well, and hence $A^2 = A^* A$ is a positive, self-adjoint trace class operator on $B'$. By Theorem \hyperref[thm:characterisation:existence]{3.1(ii)} there exists a symmetric $B$-valued Gaussian field with covariance function $(u, v) \mapsto (A^*A u, v)_{B'} = (Au, Av)_{B'} = (u, v)_{H'}$. Denoting the distribution of this Gaussian field by $\nu$, we see that $\nu$ is a probability measure on $(B, \B(B))$ with characteristic function 
    \[ \phi_\nu: B' \to \C, \quad u \mapsto \exp \left( - \frac12 \norm{u}_{H'}^2 \right). \]
    As $H \sub B$ algebraically, densely, and topologically, we conclude that $(H, B, \nu)$ is an abstract Wiener space.
\end{proof}

Part \ref{thm:equivalence:measure} can be proven by following the proof of Theorem 3.3.2 in \cite{stroock}, and making appropriate changes to account for complex values as is done in the proof of \Cref{thm:characterisation}. \qed

\subsection{Proof of \texorpdfstring{\Cref{thm:independence}}{Theorem 3.3}}

We make use of the following lemma, which is stated slightly more generally than strictly necessary for our purposes.

\begin{lemma}\label{lem:adt}
    Let $B$ be a complex Banach space and $H \sub B$ a complex Hilbert space. Assume that there exists a bounded real structure on $B$ such that its restriction to $H$ is an isometric real structure on $H$. Then $H \sub B$ algebraically, densely, and topologically if and only if $\hr \sub \br$ algebraically, densely, and topologically.
\end{lemma}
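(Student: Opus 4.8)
The plan is to prove a biconditional relating the three defining properties—algebraic inclusion, density, and topological compatibility—for the complex pair $H \sub B$ versus the real pair $\hr \sub \br$. The key structural fact I would exploit is the vector space decomposition guaranteed by the isometric real structure: since $\sigma$ restricts compatibly to both spaces, we have $B = \br \dotplus i \br$ and $H = \hr \dotplus i \hr$, with the PT-norm equivalent to the $B$-norm (and similarly for $H$). Everything should follow from transporting each of the three properties across the projections $\Re_\sigma$ and $\Im_\sigma$ and the reconstruction map $(x, y) \mapsto x + i y$, using the fact that these are continuous by the isometric assumption.

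First I would handle the algebraic inclusion. Assume $H \sub B$ as complex vector spaces. Since $\sigma$ agrees on $H$ and $B$, any $h \in \hr$ satisfies $\sigma(h) = h$ and $h \in H \sub B$, so $h \in \br$; thus $\hr \sub \br$. For the converse, if $\hr \sub \br$, then for any $z \in H$ we can write $z = \Re_\sigma z + i \Im_\sigma z$ with $\Re_\sigma z, \Im_\sigma z \in \hr \sub \br$, whence $z \in \br \dotplus i \br = B$. Next, for the topological property, I would use that the $H$-norm dominates the $B$-norm if and only if the same holds for their real parts: because the real structures are isometric, the PT-norms reduce the comparison of complex norms to a comparison of the real-part norms on $\hr$ and $\br$, which are just restrictions of the ambient norms. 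Concretely, $\norm{z}_B^2 \asymp \norm{\Re z}_B^2 + \norm{\Im z}_B^2$ and $\norm{z}_H^2 \asymp \norm{\Re z}_H^2 + \norm{\Im z}_H^2$ with the real and imaginary parts lying in $\hr, \br$, so a continuity bound on one pair transfers to the other componentwise.

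The density statement is where I expect the main obstacle, since density does not split across a direct sum as cleanly as inclusion or norm comparison. The plan is the following: to show $\hr$ dense in $\br$ from $H$ dense in $B$, take $x \in \br$ and approximate it in $B$-norm by some $z \in H$; then apply the projection $\Re_\sigma$, which is continuous and fixes $x$ (as $x \in \br$), to get $\Re_\sigma z \in \hr$ approximating $x$ in $B$-norm. For the converse, given $z = x + i y \in B$ with $x, y \in \br$, approximate $x$ and $y$ separately in $\br$ by elements of $\hr$ and recombine via the continuous map $(a, b) \mapsto a + i b$; the PT-norm equivalence ensures the recombined approximant converges to $z$ in $B$-norm. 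The delicate point is to verify that these projection and recombination arguments respect the correct norm (the ambient $B$-norm on $\br$, not some intrinsic real norm), but this is exactly guaranteed by the isometry of the real structure and the equivalence of the PT-norm with the $B$-norm, so no genuine subtlety remains once the decomposition is set up carefully.
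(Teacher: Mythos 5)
Your proposal is correct and takes essentially the same route as the paper's proof: the algebraic step via $\hr = \br \cap H$ and the complexification decomposition, density via the continuous projection $\Re_\sigma$ in one direction and componentwise approximation plus recombination in the other, and the topological step by restriction one way and by estimates using the isometry of $\sigma$ the other way (your PT-norm formulation is just a repackaging of the paper's direct triangle-inequality argument). There are no gaps; the paper's version is merely terser.
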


\begin{proof}
    Suppose first that $H \sub B$ algebraically, densely, and topologically. As the projection $\Re$ is real linear and $\hr = \br \cap H$, we see that $\hr$ is a real linear subspace of $\br$. For density, notice that for any sequence $(z_n)_{n \in \N} \sub H$ approximating some $x \in \br \sub B$ it follows by continuity of the real projection $\Re$ that $(\Re(z_n))_{n \in \N} \sub \hr$ converges in $\br$ to $x$. Finally, $\hr \sub \br$ topologically because $\hr$ and $\br$ inherit the $H$- and $B$-norms, respectively.
    
    For the converse, suppose that $\hr \sub \br$ algebraically, densely, and topologically. As vector spaces, $H$ and $B$ are the complexifications of $\hr$ and $\br$, respectively, so $H \sub B$ algebraically. For $z = x + i y \in B$ there exist sequences $(x_n)_{n \in \N}$ and $(y_n)_{n \in \N}$ in $\hr$ converging in $B$ to $x$ and $y$, respectively. By the triangle inequality $(x_n + i y_n)_{n \in \N} \sub H$ converges in $B$ to $z$. Finally, $H \sub B$ topologically by another application of the triangle inequality.
\end{proof}

We now prove \Cref{thm:independence}.

\begin{proof}[\ref{thm:independence:complex} $\implies$ \ref{thm:independence:real}:]\let\qed\relax
    Denote by $\nu$ the distribution of $Z$, and assume that $(H, B, \nu)$ is a complex abstract Wiener space. Take $f, g \in \brp$ and define $h: B \to \C$ by $h(z) = f(z) - i g(z)$, where $f$ and $g$ are extended complex linearly to $B$. Then
    \[ f(X) + g(Y) = \sqrt2 \Re\big( h(Z) \big) \sim \mc N \left( 0, \norm{h}_{H'}^2 \right), \]
    so $(X, Y)$ is a Gaussian field on $\br \times \br$. In particular we observe that $X$ and $Y$ are centred $\br$-valued Gaussian fields with characteristic function 
    \[ \phi_X(f) = \phi_Y(f) = \exp \left( -\frac12 \norm{f}_{H'}^2 \right), \quad f \in \brp. \]
    Again for $f, g \in \brp$ and $h:= f - ig$ we have
    \begin{align*}
        \phi_{(X, Y)}(f, g) &= \E \exp \big[ i\sqrt 2 \Re \big( h(Z) \big) \big] = \exp \left( -\frac14 \norm{\sqrt2 h}_{H'}^2 \right) \\
        &= \exp \left( -\frac12 \left[ \norm{f}_{H'}^2 + \norm{g}_{H'}^2 \right] \right) = \phi_X(f) \phi_Y(g),
    \end{align*}
    since the real structure on $H$ is isometric. As $X$ and $Y$ are Gaussian, this implies they are independent and identically distributed. If their law is denoted $\nu_\R$, it follows by \Cref{lem:adt} that $(\hr, \br, \nu_\R)$ is a real abstract Wiener space.
\end{proof}

\begin{proof}[\ref{thm:independence:real} $\implies$ \ref{thm:independence:complex}:]
    Assume now that $X$ and $Y$ are independent and identically distributed by $\nu_\R$, and that $(\hr, \br, \nu_\R)$ is a real abstract Wiener space. Fix $h \in B'$ and write $f := \Re(h)$ and $g:= -\Im(h)$.
    As $X$ and $Y$ are independent and identically distributed, and the real structure is isometric on $H$, we obtain
    \begin{align*}
        \phi_Z(h) &= \E \exp \left( i \frac1{\sqrt{2}} \big[ f(X) + g(Y) \big] \right) = \phi_X \left( \frac{f}{\sqrt 2} \right) \phi_Y \left( \frac{g}{\sqrt 2} \right) \\
        &= \exp \left( - \frac14\left[ \norm{f}_{H'}^2 + \norm{g}_{H'}^2 \right] \right) = \exp \left( - \frac14 \norm{h}_{H'}^2 \right).
    \end{align*}
    By \Cref{lem:adt} we may now conclude that $(H, B, \nu)$ is an abstract Wiener space.
\end{proof}

\section{Examples}\label{sec:examples}

In what follows we will give two examples of complex Gaussian fields constructed via abstract Wiener spaces. These examples are well-known, see for example Chapter I in \cite{kuo} and Chapter 6 in \cite{silvestri} in the real case. In our case the Gaussian fields in question are in fact complex. 

In this section we will discuss complex function spaces, i.e., spaces of functions $f: D \to \C$ with the domain $D$ some subset of $\R^d$ or $\C^d$, for some $d \geq 1$. Note that we will write $L^2(D)$ as short for $L^2(D,\C)$ contrary to earlier, when we used $L^2(B)$ as a short notation for $L^2(\Omega,B)$.
Other spaces of interest are the fractional Sobolev spaces $H^s(D)$ and the space of continuos functions $C(D)$. 

All these spaces have an intrinsic real structure, namely pointwise complex conjugation. They are all embedded in $\ms D'(D)$, the space of distributions, defined to be the topological dual to $\test$, the space of test functions. The action of a distribution $u \in \ms D'(D)$ on a test function $\phi \in \test$ is denoted by $u(\phi) = \brak{u, \phi}$. We assume knowledge of these spaces and their norms and/or topologies, including the Fourier transform. A recommended source is the first five chapters of \cite{grubb} for a thorough discussion.

\subsection{The complex Wiener measure}\label{ssec:wiener_measure}

One of the main examples of an abstract Wiener space is the so-called classical Wiener space. In this section we will construct a complex version, and show how it induces a complex Brownian motion. 

Fix $T > 0$. Let $H \sub H^1([0, T])$ be the subspace of Sobolev functions  $z: [0, T] \to \C$ with $z(0) = 0$. This is a Hilbert space under the inner product 
\[ (z, w)_H := (\partial z, \partial w)_{\lt}, \qquad z, w \in H. \]
It may be embedded algebraically, densely, and topologically into the Banach space $B$ of continuous functions $[0, T] \to \C$ starting in zero, equipped with the sup norm. Note that both $H$ and $B$ are separable.

There is an inherent isometric real structure on these function spaces, namely the (pointwise) complex conjugation. We see that
\[ B_\R = B \cap \R^{[0, T]} \qquad \text{ and } \qquad H_\R = H \cap \R^{[0, T]} = H \cap B_\R. \]
The following result is well known, see for example Theorem 5.3 in \cite{kuo} for a full proof. Note that the proof uses the theory of measurable norms, which is not discussed in this article.

\begin{proposition}
    There exists a probability measure $\nu_\R$ on $(\br, \B(\br))$ such that $(\hr,\allowbreak \br, \nu_\R)$ is an abstract Wiener space.
\end{proposition}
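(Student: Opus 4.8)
The strategy is to build $\nu_\R$ as the law of a real Brownian motion living on $\br = C_0([0,T], \R)$ and then verify directly that its characteristic function is the one required by \Cref{def:AWS}, thereby bypassing the theory of measurable norms entirely. The space $\hr$ of real Sobolev paths vanishing at the origin with $(z,w)_{\hr} = \int_0^T z'w'\,\d t$ is precisely the Cameron--Martin space of Brownian motion, so the whole point is to realise the corresponding Gaussian measure on the larger space of continuous paths.

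First I would produce the process. By the Kolmogorov extension theorem there is a centred Gaussian process $(W_t)_{t \in [0,T]}$ with $W_0 = 0$ and covariance $\E[W_s W_t] = \min(s,t)$; the right-hand side is a non-negative definite kernel, so the finite-dimensional Gaussian laws are consistent. Since $\E|W_t - W_s|^2 = |t-s|$ and the increments are Gaussian, $\E|W_t - W_s|^4 = 3|t-s|^2$, so Kolmogorov's continuity criterion yields a modification with almost surely continuous paths vanishing at $0$. Pushing this modification forward gives a Borel probability measure $\nu_\R$ on $\br$; here separability of $\br$ (noted above) guarantees that the cylinder $\sigma$-algebra coincides with the sup-norm Borel $\sigma$-algebra, so $\nu_\R$ is genuinely defined on $(\br, \B(\br))$.

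The heart of the argument is the identification of the characteristic function. By the Riesz--Markov theorem every $f \in \brp$ is represented by a finite signed Borel measure $\mu$ on $[0,T]$ via $f(x) = \int_0^T x \,\d\mu$ (the value $f(W)$ being unaffected by mass at $0$ since $W_0 = 0$), so $f(W) = \int_0^T W_t \,\d\mu(t)$ is a centred Gaussian variable and $\phi_{\nu_\R}(f) = \exp(-\tfrac12 \Var f(W))$. Writing $\min(s,t) = \int_0^T \one_{[0,s]}(u)\one_{[0,t]}(u)\,\d u$ and applying Fubini gives
\[ \Var f(W) = \iint \min(s,t)\,\d\mu(s)\,\d\mu(t) = \int_0^T \mu([u,T])^2\,\d u. \]
On the other hand, the Riesz representative $h_f \in \hr$ of $f|_{\hr}$ is characterised by $\int_0^T h_f' z' \,\d t = \int_0^T z \,\d\mu$ for all $z \in \hr$; integrating the right-hand side by parts (using $z(0)=0$) forces $h_f'(u) = \mu([u,T])$, whence $\norm{f}_{\hr'}^2 = \norm{h_f}_{\hr}^2 = \int_0^T \mu([u,T])^2\,\d u$. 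The two expressions coincide, so $\phi_{\nu_\R}(f) = \exp(-\tfrac12\norm{f}_{\hr'}^2)$ exactly as \Cref{def:AWS} demands.

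Finally I would record the embedding $\hr \sub \br$. The inclusion is algebraic, and topological because $\sup_{t}|z(t)| \le \sqrt T\,\norm{z}_{\hr}$ by Cauchy--Schwarz; density follows since the $C^1$ paths vanishing at $0$ are dense both in $\hr$ and, uniformly, in $\br$. I expect the main obstacle to be the passage from the Kolmogorov-extension process on the product space to an honest Borel measure on the sup-norm space $\br$ --- that is, establishing the continuous modification and checking the measurability bookkeeping --- since the covariance identity above, while the conceptual core, reduces to the clean reproducing-kernel computation sketched here.
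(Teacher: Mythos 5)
Your proposal is correct, but it takes a genuinely different route from the paper: the paper does not prove this proposition at all, instead citing Theorem 5.3 of Kuo's book, whose proof runs through Gross's theory of measurable norms (which the paper explicitly declines to develop). You instead give a self-contained construction: build Brownian motion by Kolmogorov extension plus the Kolmogorov continuity criterion, push it forward to a Borel measure on $\br$ (correctly noting that separability makes the cylinder and sup-norm Borel $\sigma$-algebras agree), and then verify the defining identity $\phi_{\nu_\R}(f) = \exp(-\tfrac12\norm{f}_{\hr'}^2)$ directly, by representing $f \in \brp$ as a signed measure $\mu$ via Riesz--Markov and matching $\Var f(W) = \int_0^T \mu([u,T])^2 \,\d u$ with the Cameron--Martin computation of the Riesz representative $h_f' (u)= \mu([u,T])$. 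The two supporting steps you gloss over --- that $f(W)=\int W_t\,\d\mu(t)$ is Gaussian (as an a.s.\ limit of Gaussian Riemann--Stieltjes sums) and that functionals on the closed subspace $\br \sub C([0,T],\R)$ extend by Hahn--Banach before Riesz--Markov applies --- are both routine. What each approach buys: the paper's citation is short and leans on a general theorem (any measurable norm on a Hilbert space yields an AWS), whereas your argument is longer but self-contained, is specific to this example, and is arguably more in the spirit of the paper, which defines abstract Wiener spaces through characteristic functions precisely to avoid measurable norms; your verification works directly with that definition.
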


By \Cref{thm:independence} we know that there exists a measure $\nu$ on $(B, \B(B))$ such that $(H, B, \nu)$ is an abstract Wiener space. More specifically, take $X, Y \sim \nu_\R$ to be independent and define $Z := (X + i Y)/\sqrt 2$, then $Z$ has law $\nu$. Then for any $f, g \in B'$ the random variables $f(Z)$ and $g(Z)$ are proper Gaussians with covariance
\[ c_Z(f, g) = \E \big[ f(Z) \bar{g(Z)} \big] = (f, g)_{H'}. \]
Let $(\Omega, \F, \P)$ denote the underlying probability space of $Z$ and define the stochastic process $W = \{W_t: t \in [0, T]\}$ associated to $Z$ by setting $W_t(\omega) := Z(\omega)(t)$ for all $\omega \in \Omega$ and $t \in [0, T]$. 

For any $t \in [0, T]$ define a function $B \to \C$ by
\[ x \mapsto  x(t) = \int_{[0, T]} x \d \delta_t, \qquad x \in B, \]
where $\delta_t$ is the Dirac point measure in $t$. This function is continuous and linear, so in this way we may identify the set of Dirac point measures $\{\delta_t: t \in [0, T]\}$ with a subset of $B'$. By a slight abuse of notation, we denote the function above by $\delta_t$. We may now write $W_t = \delta_t(Z)$ for all $t \in [0, T]$.

Let us investigate how such a functional behaves on $H$. Define for $t \in [0, T]$ the function
\[ d_t: [0, T] \to \C, \quad s \mapsto \begin{cases}
    s & \text{ if } s \leq t \\
    t & \text{ if } s > t.
\end{cases} \]
Since $\partial d_t = \one_{[0, t]}$ we have $d_t \in H$, and furthermore,
\[ \delta_t(h) = h(t) = \int_0^t \partial h(s) \d s = \int_{[0, T]} \partial h \, \partial d_t = (h, d_t)_{H} \]
for all $h \in H$. By the Riesz Representation Theorem we obtain $\norm{\delta_t}_{H'} = \norm{d_t}_H = \sqrt t$.

We may now observe some properties of the process $W$. By definition of the space $B$ we know that $\P$-almost surely, $t \mapsto W_t$ is continuous and starts in zero. Next, for any $t \in [0, T]$ the random variable $W_t = \delta_t(Z)$ is a proper complex Gaussian with variance $\lownorm{\delta_t}_{H'}^2 = t^2$. Finally, fix $0 \leq s_1 < t_1 \leq s_2 < t_2 \leq T$. Then 
\begin{align*} 
\E \big[ (W_{t_2} - W_{s_2}) (\overline{W_{t_1} - W_{s_1}}) \big] &= \big( \delta_{t_2} - \delta_{s_2}, \delta_{t_1} - \delta_{s_1} \big)_{H'} \\
&= \big( d_{t_1} - d_{s_1}, d_{t_2} - d_{s_2} \big)_H = 0,
\end{align*}
so $W$ has independent increments. 
Summarising these properties, we see that $W$ is the complex Brownian motion. We have now shown the following result.

\begin{proposition}
    There exists a $B$-valued Gaussian field $Z$ such that the stochastic process $W_t := \int Z \d \delta_t$ is the complex Brownian motion.
\end{proposition}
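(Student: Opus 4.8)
The plan is to reuse the field $Z$ already produced by \Cref{thm:independence}, so that $(H, B, \nu)$ is a complex abstract Wiener space with $Z \sim \nu$, and then to check that $W_t := \delta_t(Z)$ satisfies the defining properties of complex Brownian motion. Recall that $Z$ is a proper $B$-valued Gaussian field with covariance $(f, g) \mapsto (f, g)_{H'}$; concretely one may take $Z = (X + iY)/\sqrt2$ with $X, Y \sim \nu_\R$ independent real Brownian motions. The whole argument then amounts to translating the $H'$-geometry of the evaluation functionals into the probabilistic structure of $W$.

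First I would verify that each $\delta_t$ is a genuine element of $B'$: since $B$ carries the sup norm, the evaluation $x \mapsto x(t)$ is linear of operator norm at most $1$, hence continuous, so $W_t = \delta_t(Z)$ is a well-defined complex random variable. Because $Z$ is a proper Gaussian field, every finite $\K$-linear combination $\sum_k c_k \delta_{t_k}(Z)$ is a proper complex Gaussian, so $(W_t)_{t \in [0,T]}$ is a jointly Gaussian family with identically zero pseudo-covariance. Next I would identify the Riesz representatives: with $d_t$ as above one has $\delta_t|_H = (\cdot, d_t)_H$ and $\partial d_t = \one_{[0,t]}$, whence
\[ (\delta_t, \delta_s)_{H'} = (d_s, d_t)_H = \int_0^T \one_{[0,s]}\,\one_{[0,t]} \d r = s \wedge t. \]
This one computation supplies both the marginal variances $\lownorm{\delta_t}_{H'}^2 = t$ and, for disjoint time intervals, the orthogonality of increments.

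With these ingredients in hand the remaining properties fall out. Continuity of $t \mapsto W_t$ and the initial condition $W_0 = 0$ are inherited from $B$, since $\nu$-almost every realization of $Z$ is a continuous function vanishing at the origin. The covariance is $\E[W_t \overline{W_s}] = (\delta_t, \delta_s)_{H'} = s \wedge t$ and the pseudo-covariance vanishes by properness, so each increment $W_t - W_s$ is a proper complex Gaussian of variance $t - s$, and in particular the law of $W_t - W_s$ depends only on $t - s$.

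The step I expect to carry the real weight is the independent-increments property, since vanishing covariance alone does not force independence. Here I would invoke the joint Gaussianity of the family $(W_t)$ together with \Cref{ex:independent_gaussians}: for jointly Gaussian and proper increments, mutual independence over any finite partition of $[0,T]$ is equivalent to the simultaneous vanishing of all pairwise covariances and pseudo-covariances. For $0 \le s_1 < t_1 \le s_2 < t_2 \le T$ the relevant covariance is $(\delta_{t_2} - \delta_{s_2}, \delta_{t_1} - \delta_{s_1})_{H'} = (d_{t_1} - d_{s_1}, d_{t_2} - d_{s_2})_H$, which vanishes because the supports of $\one_{(s_1,t_1]}$ and $\one_{(s_2,t_2]}$ are disjoint; the pseudo-covariances vanish automatically by properness. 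Assembling these facts identifies $W$ as the complex Brownian motion.
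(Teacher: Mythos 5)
Your proposal is correct and follows essentially the same route as the paper: take $Z = (X+iY)/\sqrt2$ with $X, Y \sim \nu_\R$ independent via \Cref{thm:independence}, identify each $\delta_t$ as an element of $B'$ with Riesz representative $d_t$ (so $\partial d_t = \one_{[0,t]}$), and read off continuity, the marginal laws, and the increment structure from the $H'$-inner products. Two minor points in your favour: your computation $(\delta_t, \delta_s)_{H'} = s \wedge t$ yields the correct marginal variance $t$ (the paper's text contains a typo stating $t^2$), and your explicit appeal to joint Gaussianity and properness to upgrade vanishing increment covariances to genuine independence makes precise a step the paper asserts without comment.
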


Fix now $n \in \N$ and $0 =: t_0 < t_1 < \dots < t_n \leq T$, and define the complex random vectors
\[ X := \big( W_{t_1}, W_{t_2}, \dots, W_{t_n} \big) \quad \text{ and } \quad Y := \big( W_{t_1}, W_{t_2} - W_{t_1}, \dots, W_{t_n} - W_{t_{n-1}} \big). \]
Let $f: \C^n \to \R$ be a non-negative measurable function. Then
\begin{align*} 
\int_{B} f\big(z(t_1), \dots, z(t_n)\big) \d \nu(z) &= \E \big[ f(X) \big] \\
&= \E \big[ f(Y_1, Y_1 + Y_2, \dots, Y_1 + \dots + Y_n) \big]
\end{align*}
As $W$ is the complex Brownian motion, $Y$ is a vector of independent proper complex Gaussians with $\Var Y_k = t_k - t_{k-1}$ for $1 \leq k \leq n$. If $c$ is the relevant normalisation constant, we obtain by a change of variables
\begin{align*} \E \big[ f(X) \big] = c \int_{\C^n} &\,f(y_1, y_1 + y_2, \dots, y_1 + \dots + y_n) \\ &\,\exp \left( -\frac12 \sum_{k=1}^n \frac{|y_k|^2}{t_k - t_{k-1}} \right) \d y_n \cdots\, d y_1 \\ = c \int_{\C^n} &\,f(x_1, \dots, x_n) \exp \left( -\frac12 \sum_{k=1}^n \frac{|x_k - x_{k-1}|^2}{t_k - t_{k-1}} \right) \d x_n \cdots\, \d x_1, \end{align*}
where we take $x_0 := 0$.

Fix now some arbitrary $x_0 \in \C$ and denote by $C_{x_0}([0, T])$ the space of continuous functions $[0, T] \to \C$ with $x(0) = x_0$. This is not a vector space, but equipped with the sup norm it has a topology and hence a Borel $\sigma$-algebra. 

Defining the bijective shift $s_{x_0}: B \to C_{x_0}([0, T])$ by $z \mapsto z + x_0$, we see that $s_{x_0}(Z)$ is measurable with respect to the Borel $\sigma$-algebra of $C_{x_0}([0, T])$. We may now conclude the following.

\begin{proposition}
    Fix $x_0 \in \C$ and $T > 0$. Then there is a unique measure $\nu_{x_0}$ on the Borel $\sigma$-algebra of $C_{x_0}([0, T])$ such that for all $n \in \N$, $0 =: t_0 < t_1 < \dots < t_n \leq T$ and each non-negative measurable function $f: \C^n \to \R$ we have
    \begin{align*} \int_{C_{x_0}([0, T])} &\,f \big( x(t_1), \dots, x(t_n) \big) \d \nu_{x_0}(x) = \\ &= c\int_{\C^n} f(x) \exp \left( -\frac12 \sum_{k=1}^n \frac{|x_k - x_{k-1}|^2}{t_k - t_{k-1}} \right) \d x. \end{align*}
\end{proposition}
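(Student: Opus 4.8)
The plan is to realise $\nu_{x_0}$ as a pushforward measure and then read off the stated identity from the case of starting value $0$ already established above, so that the only genuinely measure-theoretic work is the uniqueness claim.

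For existence I would use the bijective shift $s_{x_0}\colon B \to C_{x_0}([0,T])$, $z \mapsto z + x_0$, which was already shown to make $s_{x_0}(Z)$ Borel measurable, and simply \emph{define} $\nu_{x_0}$ to be the law of $s_{x_0}(Z)$, i.e.\ the pushforward $(s_{x_0})_*\nu$. By the change-of-variables formula for pushforwards, $\int g \d\nu_{x_0} = \int_B g(z + x_0)\d\nu(z)$ for every non-negative Borel $g$ on $C_{x_0}([0,T])$. Taking $g(x) = f\big(x(t_1),\dots,x(t_n)\big)$ turns the left-hand side into $\int_B f\big(z(t_1)+x_0,\dots,z(t_n)+x_0\big)\d\nu(z)$, which is precisely the finite-dimensional integral established just above (with starting value $0$) applied to the shifted integrand $(u_1,\dots,u_n)\mapsto f(u_1+x_0,\dots,u_n+x_0)$. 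Performing the translation $y_k = x_k - x_0$ in the resulting Gaussian integral over $\C^n$ leaves Lebesgue measure invariant and sends $y_k - y_{k-1}$ to $x_k - x_{k-1}$ for $k \ge 2$ and $y_1 - 0$ to $x_1 - x_0$, so that the fixed starting value $x_0$ reappears in the first increment $|x_1 - x_0|^2$ and the claimed formula is reproduced verbatim. This step is a routine change of variables.

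For uniqueness, suppose $\mu$ is any probability measure on the Borel $\sigma$-algebra of $C_{x_0}([0,T])$ satisfying the stated identity. Choosing $f = \one_A$ for Borel $A \sub \C^n$ shows that $\mu$ and $\nu_{x_0}$ assign the same value to every finite-dimensional cylinder set $\{x : (x(t_1),\dots,x(t_n)) \in A\}$, since both equal the same Gaussian integral over $A$. These cylinder sets form a $\pi$-system (the intersection of two cylinders, over time sets $S_1$ and $S_2$, is a cylinder over $S_1 \cup S_2$), so Dynkin's $\pi$--$\lambda$ theorem yields $\mu = \nu_{x_0}$ as soon as this $\pi$-system is shown to generate the full Borel $\sigma$-algebra.

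Establishing that generation property is where I expect the main obstacle to lie, and it rests on separability of $C_{x_0}([0,T])$ in the sup-norm. Since every path is continuous, the supremum defining an open ball may be taken over the countable dense set $\mathbb Q \cap [0,T]$, so that
\[ \big\{ x : \textstyle\sup_{t \in [0,T]} |x(t) - x_*(t)| < r \big\} = \bigcup_{m \in \N} \bigcap_{t \in \mathbb Q \cap [0,T]} \big\{ x : |x(t) - x_*(t)| \le r - \tfrac1m \big\} \]
exhibits each open ball as a countable combination of one-dimensional cylinder sets. Because $C_{x_0}([0,T])$ is separable metric, its open balls generate the Borel $\sigma$-algebra, whence that $\sigma$-algebra is contained in — and, as the evaluation maps are continuous, equal to — the $\sigma$-algebra generated by the cylinder sets. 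This closes the uniqueness argument and completes the proof.
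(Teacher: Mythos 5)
Your proof is correct. The existence half coincides with the paper's own argument: there too $\nu_{x_0} := \nu \circ s_{x_0}^{-1}$, and the finite-dimensional identity is read off from the computation already carried out for starting value $0$; your explicit translation $y_k = x_k - x_0$ is precisely the detail compressed into the paper's phrase ``follows from the previous argument''. The uniqueness half is where you take a genuinely different route. The paper settles uniqueness in one clause, appealing to the uniqueness of the Hilbert space in the abstract Wiener space $(H, B, \nu)$ (part (ii) of \Cref{thm:equivalence}) together with ``the fact that $W$ completely determines $Z$ and vice versa''; this leans on the AWS machinery and, in particular, takes for granted the key measure-theoretic point that finite-dimensional distributions pin down a Borel measure on path space. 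Your argument proves exactly that point from first principles: cylinder sets form a $\pi$-system on which any two measures satisfying the stated identity agree, and because paths are continuous and $C_{x_0}([0, T])$ is separable in the sup norm, every open ball --- hence every Borel set --- lies in the $\sigma$-algebra generated by cylinders, so Dynkin's $\pi$--$\lambda$ theorem finishes the job. Your route is more elementary and self-contained (it is the classical uniqueness argument for Wiener measure), while the paper's route buys brevity by reusing its general framework, at the cost of leaving the cylinder-generation step implicit. Two minor points worth noting: the statement asks for uniqueness among all measures, not only probability measures, but taking $f \equiv 1$ in the identity forces total mass $1$, so your standing assumption costs nothing; and the cylinders over time $t = 0$ that your rational-time intersection includes are trivial, since every $x \in C_{x_0}([0, T])$ satisfies $x(0) = x_0$, so they cause no harm.
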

\begin{proof}
    Existence follows from the previous argument, where $\nu_{x_0} = \nu \circ s_{x_0}^{-1}$, and uniqueness follows by uniqueness of the abstract Wiener space $(H, B, \nu)$ and the fact that $W$ completely determines $Z$ and vice versa.
\end{proof}

This is a complex, one-dimensional version of Theorem 20.2 in \cite{hall}. The full version can be proven rigorously in a similar manner, though it requires a lot more notation. With that one can prove a complex analogue to an integrated form of the Feynman-Kac formula.

Let $V: \C \to \C$ be continuous and bounded, and define the Hamiltonian $\ms H := -\Delta + V$, this is a self-adjoint operator on the complex Hilbert space $L^2(\C)$. We consider the operator $\exp(-T \H)$ defined by the Trotter product formula (Theorem 20.1 in \cite{hall}) to be
\[ \exp \big( -T\H \big) = \lim_{n \to \infty} \left[ \exp \left( \frac{T\Delta}{n} \right) \exp \left( -\frac{TV}{n} \right) \right]^n. \]
Here $\exp(-TV/n)$ is a multiplication operator and
\[ \exp \left( \frac{T\Delta}{n} \right) f(x_0) = \sqrt{\frac{n}{\pi T}} \int_\C \exp \left( - \frac{n}{T}|x - x_0|^2 \right) \d x, \qquad x_0 \in \C, \]
for $f \in L^2(\C)$. Finally, note that the space $C([0, T])$ is Banach when endowed with the supnorm, as $[0, T]$ is compact. Hence we can consider probability measures defined on the Borel $\sigma$-algebra of $C([0, T])$. 

\begin{theorem}[Feynman-Kac formula]
    There exists a probability measure $\nu_T$ on $C([0, T])$ such that
    \[ \big( \exp(-T\H) f, g \big)_\lt = \int_{C([0, T])} \bar{g(x(0))} \exp \left( \int_0^T V\big(x(s)\big) \d s \right) f\big(x(T)\big) \d \nu_T(x) \]
    for all $f, g \in L^2(\C)$. 
\end{theorem}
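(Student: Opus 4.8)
The plan is to reduce the operator matrix element to a limit of finite-dimensional Gaussian integrals via the Trotter product formula, to recognise each such integral as a cylinder integral against the complex Wiener measure built in the previous proposition, and then to pass to the limit by dominated convergence.

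\emph{Step 1 (Trotter reduction).} Using the Trotter product formula for $e^{-T\H}$ I would write
\[ \big( e^{-T\H} f, g \big)_\lt = \lim_{n \to \infty} \Big( \big[ \exp(\tfrac{T\Delta}{n}) \exp(-\tfrac{TV}{n}) \big]^n f, g \Big)_\lt. \]
Each $\exp(\tfrac{T\Delta}{n})$ is the integral operator with the Gaussian kernel $p_{T/n}(x, y) = \tfrac{n}{\pi T}\exp(-\tfrac{n}{T}|x - y|^2)$ on $\C$, while $\exp(-\tfrac{TV}{n})$ is multiplication by $e^{-TV/n}$. Expanding the $n$-fold composition and setting $t_k := kT/n$, the approximant turns into the integral over $\C^{n+1}$
\[ \int_{\C^{n+1}} \overline{g(x_0)} \Big[ \prod_{k=1}^n p_{T/n}(x_{k-1}, x_k) \Big] \Big[ \prod_{k} e^{-\frac{T}{n} V(x_k)} \Big] f(x_n) \, \d x_0 \cdots \d x_n. \]

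\emph{Step 2 (Identification with a cylinder integral).} Because $t_k - t_{k-1} = T/n$, the kernel product $\prod_{k} p_{T/n}(x_{k-1}, x_k)$ is, up to the normalisation $\kappa$, exactly the Gaussian density $\exp(-\tfrac12 \sum_k |x_k - x_{k-1}|^2/(t_k - t_{k-1}))$ of the previous proposition. Hence the $n$-th approximant equals $\int_{C([0,T])} \Phi_n \, \d \nu_T$, where
\[ \Phi_n(x) := \overline{g(x(0))} \, \exp\!\Big( -\tfrac{T}{n} \sum_{k} V(x(t_k)) \Big) \, f(x(T)), \]
and $\nu_T$ is assembled from the measures $\nu_{x_0}$ of the previous proposition by letting the initial point $x(0)$ vary, so that its finite-dimensional marginals at the $t_k$ are precisely these Gaussian products. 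Verifying that these marginals are consistent and define a single measure $\nu_T$ on $C([0,T])$ — via Kolmogorov consistency and the a.s.\ continuity of the complex Brownian paths already recorded for $Z$ and $W$ — is one of the two technical points.

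\emph{Step 3 (Passage to the limit).} For $\nu_T$-almost every path $x$, continuity of $x$ and of $V$ makes $s \mapsto V(x(s))$ continuous, so the Riemann sums $\tfrac{T}{n} \sum_k V(x(t_k))$ converge to $\int_0^T V(x(s)) \, \d s$, and $\Phi_n(x)$ converges pointwise to the integrand $\overline{g(x(0))} \exp(\int_0^T V(x(s))\,\d s) f(x(T))$ in the statement. The second and main obstacle is interchanging this limit with the path-space integral. I would invoke dominated convergence: since $V$ is bounded, $\big|\exp(-\tfrac{T}{n}\sum_k V(x(t_k)))\big| \le e^{C}$ uniformly in $n$ with $C := 2T\sup_\C |V| < \infty$, so each $\Phi_n$ is dominated by $e^{C}\,|g(x(0))|\,|f(x(T))|$, whose $\nu_T$-integrability follows from the Cauchy--Schwarz inequality together with $f, g \in \lt$ and the square-integrability of the marginal densities of $x(0)$ and $x(T)$ under $\nu_T$ (a Gaussian integrability ultimately guaranteed by Fernique's Theorem). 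Passing to the limit in Step 1 then gives the claimed identity, and uniqueness of $\nu_T$ is inherited from the uniqueness in the previous proposition.
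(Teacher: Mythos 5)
The first thing to note is that the paper itself contains no proof of this theorem: it is stated as a consequence of the Hall-style machinery (the Trotter product formula together with the finite-dimensional-distribution proposition for $\nu_{x_0}$), so your Trotter/cylinder-integral/dominated-convergence outline is precisely the intended route, and Steps 1 and 3 are sound in outline. The genuine gap is in Step 2, in the construction of $\nu_T$. The finite-dimensional distributions you need $\nu_T$ to have are $\kappa \prod_{k=1}^{n} p_{T/n}(x_{k-1},x_k)\,\d x_0 \cdots \d x_n$, and these have \emph{infinite} total mass: integrating out $x_1,\dots,x_n$ leaves $\int_\C \d x_0 = \infty$, because the starting coordinate carries Lebesgue measure (this is visible in your own Step 1 display). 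Hence no probability measure has these marginals, and the Kolmogorov extension theorem you invoke --- a statement about consistent families of \emph{probability} measures --- cannot produce $\nu_T$. Indeed, taking $V \equiv 0$, the claimed identity forces the joint law of $\big(x(0), x(T)\big)$ under $\nu_T$ to be $k_T(u,v)\,\d u \,\d v$ with $k_T$ the heat kernel, which has infinite mass; so the phrase ``probability measure'' in the statement is itself an error (in Hall's version the Feynman--Kac measure is only $\sigma$-finite), and you should not try to prove it. The repair uses objects the paper already has: set $\nu_T(A) := \int_\C \nu_{x_0}(A)\,\d x_0$, i.e.\ push forward $\operatorname{Leb}_\C \otimes \nu$ under $(x_0, z) \mapsto s_{x_0}(z)$. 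This is a $\sigma$-finite Borel measure on $C([0,T])$ with exactly the required finite-dimensional distributions, and it also streamlines your Step 3: the $x(0)$- and $x(T)$-marginals of this $\nu_T$ are exactly Lebesgue measure (Lebesgue convolved with a Gaussian is Lebesgue), so your dominating function $e^{C}\,|g(x(0))|\,|f(x(T))|$ has $\nu_T$-integral at most $e^{C} \norm{f}_{\lt} \norm{g}_{\lt}$ by Cauchy--Schwarz. Fernique's theorem plays no role here, and ``square-integrability of marginal densities'' is not the right condition.

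Two smaller defects should also be repaired. First, a sign inconsistency: Step 2 correctly carries the Trotter factors $\exp\big({-\tfrac{T}{n}\sum_k V(x(t_k))}\big)$, but Step 3 then asserts these converge to $\exp\big({+\int_0^T V(x(s))\,\d s}\big)$; the Riemann-sum limit is $\exp\big({-\int_0^T V(x(s))\,\d s}\big)$. The plus sign in the statement is incompatible with the paper's own Trotter formula for $e^{-T\H}$; fix the sign rather than reproduce it. Second, your claim that $\prod_k p_{T/n}(x_{k-1},x_k)$ is ``up to the normalisation $\kappa$, exactly'' the Gaussian density $\exp\big({-\tfrac12 \sum_k |x_k - x_{k-1}|^2/(t_k - t_{k-1})}\big)$ of the previous proposition is false as stated: the exponents differ by a constant factor, since $e^{t\Delta}$ is the transition semigroup of a Brownian motion run with generator $\Delta$ (not $\tfrac12 \Delta$), while the complex Brownian motion $W$ constructed from the abstract Wiener space runs at a different speed. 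A correct identification requires a deterministic rescaling of the path measure (equivalently, using the law of a suitable constant multiple of $W$) and tracking that constant through the computation; a discrepancy in the exponent cannot be absorbed into the multiplicative constant $\kappa$.
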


\subsection{Complex fractional Gaussian fields}\label{ssec:fgf}

For this section, let $s > 0$ be a positive real number and $D \sub \rd$ be an open, bounded, non-empty set. The following proposition is a well-known result, the proof of which is an application of Weyl’s law, Theorem 3 in Section 6.3 and Theorem 1 in Section 6.5 of \cite{evans}.

\begin{proposition}\label{prop:weyl}
    There exists a nondecreasing sequence $(\lambda_n)_{n \in \N} \sub (0, \infty)$ and an orthonormal basis $(w_n)_{n \in \N} \sub H_0^1(D) \cap C^\infty(\bar{D})$ of $L^2(D; \R)$ such that each $w_n$ is a non-trivial weak solution to
    \[ -\Delta w_n = \lambda_n w_n \text{ in } D, \qquad w_n = 0 \text{ on } \partial D. \]
    Furthermore, the sequence $(n^2 \lambda_n^{-d})_{n \in \N}$ converges to some positive constant.
\end{proposition}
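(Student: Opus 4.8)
The plan is to read off the eigenbasis from the spectral theory of the inverse Dirichlet Laplacian, and then to obtain the second assertion as an instance of Weyl's asymptotic law. First I would introduce the Green operator: since $D$ is bounded, the Poincar\'e inequality makes the Dirichlet form $a(u, v) = \int_D \nabla u \cdot \nabla v$ coercive on $H_0^1(D)$, so by the Lax--Milgram theorem the equation $-\Delta u = f$ with vanishing boundary data admits, for every $f \in L^2(D; \R)$, a unique weak solution $u =: Tf \in H_0^1(D)$. Regarded as an operator on $L^2(D; \R)$, this solution operator $T$ is self-adjoint and positive (from the symmetry and coercivity of $a$) and compact, the latter because the Rellich--Kondrachov embedding $H_0^1(D) \hookrightarrow L^2(D; \R)$ is compact for any bounded $D$. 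The spectral theorem for compact self-adjoint operators then furnishes an orthonormal basis $(w_n)_{n \in \N}$ of $L^2(D; \R)$ with $T w_n = \mu_n w_n$, $\mu_n > 0$ and $\mu_n \to 0$. Setting $\lambda_n := \mu_n^{-1}$ and ordering the $\mu_n$ nonincreasingly yields a nondecreasing sequence $(\lambda_n)_{n \in \N} \subset (0, \infty)$, and unwinding the definition of $T$ shows each $w_n$ is a non-trivial weak solution of $-\Delta w_n = \lambda_n w_n$ in $D$ with $w_n = 0$ on $\partial D$. For the smoothness, interior regularity $w_n \in C^\infty(D)$ follows by bootstrapping the elliptic equation $-\Delta w_n = \lambda_n w_n$, whereas the claimed regularity up to $\bar{D}$ genuinely requires sufficient smoothness of $\partial D$ (which I would impose or assume), after which boundary elliptic regularity upgrades $w_n$ to $C^\infty(\bar{D})$.

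The substantive part is the limit $(n^2 \lambda_n^{-d}) \to \kappa$, which is precisely Weyl's law. I would phrase it through the counting function $N(\lambda) := \#\{n : \lambda_n \leq \lambda\}$ and establish
\[ N(\lambda) \sim \frac{\omega_d \, |D|}{(2\pi)^d}\, \lambda^{d/2} \qquad (\lambda \to \infty), \]
with $\omega_d$ the volume of the unit ball in $\rd$. The standard equivalence between this and the asymptotics of the ordered eigenvalues gives $\lambda_n \sim \big(n (2\pi)^d / (\omega_d |D|)\big)^{2/d}$, whence $n^2 \lambda_n^{-d} \to \kappa := \big(\omega_d |D| / (2\pi)^d\big)^2 \in (0, \infty)$, which is the assertion.

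To prove the Weyl asymptotic itself I would use Dirichlet--Neumann bracketing: tile $\rd$ by a grid of cubes of side $h$, use the monotonicity of eigenvalue counts under imposing Dirichlet (resp.\ relaxing to Neumann) conditions to sandwich $N(\lambda)$ between the counts coming from the cubes strictly inside $D$ and those meeting $D$, and then insert the explicitly known eigenvalues of a cube together with lattice-point counting. Sending $\lambda \to \infty$ and afterwards $h \to 0$ forces matching leading constants. Alternatively one can derive the short-time heat-trace asymptotic $\sum_n e^{-\lambda_n t} \sim (4\pi t)^{-d/2} |D|$ and invoke the Karamata Tauberian theorem. This matching of the upper and lower bounds to the same constant is the main obstacle; by contrast the spectral decomposition above is entirely routine.
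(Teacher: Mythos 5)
The paper gives no proof of this proposition at all: it is stated as classical background (the spectral theorem for the Dirichlet Laplacian together with Weyl's law) and immediately put to use in defining $\fls$, so there is no internal argument to compare yours against; your sketch is the standard textbook proof. Its spectral half is correct under the paper's actual hypotheses: Poincar\'e/Friedrichs plus Lax--Milgram give the solution operator, $H_0^1(D) \hookrightarrow L^2(D;\R)$ is compact for \emph{every} bounded open $D$ (extend by zero to a large ball), the spectral theorem produces the orthonormal eigenbasis, and interior elliptic bootstrapping gives $w_n \in C^\infty(D)$; your reduction of $n^2 \lambda_n^{-d} \to \kappa = \bigl(\omega_d |D| / (2\pi)^d\bigr)^2$ from the counting-function asymptotics is also right. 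You are moreover correct to flag the boundary issue: for $D$ merely open, bounded and non-empty (the paper's only assumption; think of a slit domain, or a cube minus a fat Cantor-type compact set), smoothness up to $\bar{D}$ is simply false, so the $C^\infty(\bar{D})$ claim is a defect of the proposition as stated rather than of your argument --- only interior regularity and the basis property are used later in the paper.

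The one genuine gap, measured against the paper's stated generality, is in your bracketing proof of Weyl's law. The Neumann upper bound counts eigenvalues of all cubes meeting $D$, hence yields $\limsup_{\lambda \to \infty} N(\lambda)\lambda^{-d/2} \leq (2\pi)^{-d}\omega_d |Q_h|$, where $Q_h$ is the union of the closed cubes meeting $D$; as $h \to 0$ this tends to $|\bar{D}|$, not $|D|$, and there exist bounded open sets with $|\bar{D}| > |D|$ (remove from a cube a compact set with empty interior and positive Lebesgue measure). So bracketing alone proves the claim only when $|\partial D| = 0$, which your added smoothness hypothesis guarantees but the paper's hypotheses do not. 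To get the proposition in the paper's generality, promote your ``alternative'' to the main argument: $\trace e^{t\Delta_D} = \int_D p_t^D(x,x) \d x \leq (4\pi t)^{-d/2}|D|$ by domination by the free heat kernel, while domain monotonicity against inscribed balls gives $(4\pi t)^{d/2} p_t^D(x,x) \to 1$ for every $x \in D$, so Fatou supplies the matching lower bound and Karamata's Tauberian theorem gives $N(\lambda) \sim (2\pi)^{-d}\omega_d |D| \lambda^{d/2}$ with no boundary regularity whatsoever; equivalently, one can cite the Birman--Solomyak/Rozenblum theorem that Dirichlet Weyl asymptotics hold for every open set of finite measure.
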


This set $(w_n)_{n \in \N}$ then also forms an orthonormal basis of $L^2(D)$. For $s > 0$ we define the fractional Laplacian of order $s$ to be the operator in $\ltd$ given by
\[ \fls u := \sum_{n \in \N} \lambda_n^s (u, w_n)_{\lt} w_n. \]
This definition is independent of the choice of orthonormal basis $(w_n)_{n \in \N}$. We denote the domain of $\fls$ by $\lso := \{u \in \ltd: \fls u \in \ltd\}$ and define
\[ (u, v)_\ls := \big( \fls u, \fls v \big)_\lt, \qquad u, v \in \lso. \]
This is an inner product making $\lso$ into a complex Hilbert space with orthonormal basis $v^s = (v^s_n)_{n \in \N}$ defined by $v^s_n := \lambda_n^{-s} w_n$. 

\begin{remark}\label{rem:ls_equivalence}
    The $\lso$ spaces concern fractionally differentiable functions, in a similar way to the fractional Sobolev spaces. In particular, one can show that if $D$ is Lipschitz, then $\lso \sub H_0^{2s}(D)$ with equality whenever $s \notin \{\frac14 k: k \in 2\N_0 + 1\}$, see Chapter 3 in \cite{mclean}. Furthermore, $\test \sub \ms L_t(D) \sub \lso$ for any $t \geq s > 0$.
\end{remark}

Let $(a_n)_{n \in \N} \sub \C$ be some sequence such that
\[ \sum_{n \in \N} \lambda_n^{-2s} |a_n|^2 < \infty. \]
Then $u := \sum_n a_n w_n \in \ls'(D) \sub \ms D'(D)$ and $\ds u := \sum_n \lambda_n^{-s} a_n w_n \in \ltd$. We may now define $\lsd$ to be the space of such distributions $u$. It is a Hilbert space when equipped with the inner product 
\[ (u, v)_\lsm := \big( \ds u, \ds v \big)_\lt, \qquad u, v \in \lsd, \]
and $v^{-s} = (v_n^{-s})_{n \in \N}$ defined by $v^{-s}_n := \lambda_n^s w_n$ is an orthonormal basis. Setting $\mc L_0(D) := \ltd$ we may write for $0 \leq s \leq t$, 
\[ \test \sub \ms L_t(D) \sub \lso \sub \ltd \sub \lsd \sub \ms L_{-t}(D) \sub \ms D'(D). \]
Note in particular that $\lsd \sub \ms L_{-t}(D)$ algebraically, densely, and topologically, which facilitates the construction an abstract Wiener space. To do this, we apply Theorem \hyperref[thm:equivalence:operator]{3.2(i)}.

\begin{proposition}
    Let $D \sub \rd$ be an open, bounded, non-empty set and fix $s > 0$. For any $t > s + \frac14 d$ there exists a probability measure $\nu_t$ on the complex space $\ms L_{-t}(D)$ such that $(\lsd, \ms L_{-t}(D), \nu_t)$ is an abstract Wiener space.
\end{proposition}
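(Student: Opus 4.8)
The plan is to produce $(\lsd, \ms L_{-t}(D), \nu_t)$ by applying Theorem \hyperref[thm:equivalence:operator]{3.2(i)} with $H := \lsd$ and a diagonal operator on $H'$ chosen so that the Banach space manufactured by that theorem is exactly $\ms L_{-t}(D)$. First I would fix the identifications coming from the eigenbasis. Via the $L^2(D)$-pairing the dual $H' = \lsd'$ is isometrically identified with $\ms L_s(D)$, so that a functional written in the eigenbasis as $f \leftrightarrow \sum_n a_n w_n$ has $\norm{f}_{H'}^2 = \sum_n \lambda_n^{2s}\orm{a_n}^2$; in the same way $\ms L_{-t}(D)' \cong \ms L_t(D)$ with squared norm $\sum_n \lambda_n^{2t}\orm{a_n}^2$. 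Because $t \geq s$, the chain displayed above embeds $\ms L_t(D) \sub \ms L_s(D)$, so $\ms L_{-t}(D)'$ sits inside $H'$.

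I would then take $A\colon H' \to H'$ to be the diagonal operator $A e_n^s = \lambda_n^{-(t-s)} e_n^s$ on the orthonormal basis $(e_n^s)$ of $\ms L_s(D) \cong H'$; it is positive and self-adjoint by inspection. A direct computation in coordinates shows that its range is precisely $R(A) = \ms L_t(D)$ and that the inner product $(u,v) \mapsto (A^{-1}u, A^{-1}v)_{H'}$ which Theorem \hyperref[thm:equivalence:operator]{3.2(i)} places on $R(A)$ is nothing but the $\ms L_t(D)$-inner product. Hence the Banach space $B = R(A)'$ built in that proof is, under the $L^2$-pairing, exactly $\ms L_{-t}(D)$, and the measure it carries has characteristic function $f \mapsto \exp(-\tfrac12\norm{f}_{H'}^2)$ on $\ms L_{-t}(D)'$. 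Together with the fact, read off from the displayed chain, that $\lsd \sub \ms L_{-t}(D)$ algebraically, densely, and topologically, this is exactly the data required by \Cref{def:AWS}.

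The only real analytic content is the Hilbert-Schmidt hypothesis of Theorem \hyperref[thm:equivalence:operator]{3.2(i)}. Since $(e_n^s)$ is orthonormal in $H'$,
\[ \norm{A}_{\mathrm{HS}}^2 = \sum_{n \in \N}\norm{A e_n^s}_{H'}^2 = \sum_{n \in \N} \lambda_n^{-2(t-s)}, \]
so I must decide when this series converges. This is where \Cref{prop:weyl} is used: from $n^2\lambda_n^{-d} \to \kappa \in (0,\infty)$ one gets $\lambda_n \sim \kappa^{-1/d} n^{2/d}$, whence $\lambda_n^{-2(t-s)} \sim \kappa^{2(t-s)/d}\, n^{-4(t-s)/d}$. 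The comparison series $\sum_n n^{-4(t-s)/d}$ converges if and only if $4(t-s)/d > 1$, i.e. $t > s + \tfrac14 d$, which is precisely the stated hypothesis. Under it, $A$ is Hilbert-Schmidt, Theorem \hyperref[thm:equivalence:operator]{3.2(i)} applies, and we obtain $\nu_t$.

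The step I expect to cost the most care is matching the abstractly constructed dual $B = R(A)'$ with the concrete distribution space $\ms L_{-t}(D)$. One must keep two dualities apart — the antilinear Riesz identification implicit in Theorem \hyperref[thm:equivalence:operator]{3.2(i)} and the bilinear $L^2(D)$-pairing that realizes $\ms L_{-t}(D)$ as $\ms L_t(D)'$ — and verify that the embedding $H \hookrightarrow B$ supplied by the theorem coincides with the inclusion $\lsd \sub \ms L_{-t}(D)$. Once the spaces are correctly aligned, the choice of $A$ and the Weyl-law estimate are routine, and the abstract Wiener space follows immediately.
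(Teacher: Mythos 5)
Your proposal is correct and takes essentially the same route as the paper: the paper applies Theorem 3.2(i) to the operator $f \mapsto f \circ (-\Delta)^{s-t}$ on $\ms L_{-s}'(D)$, which in the eigenbasis is exactly your diagonal operator $e_n^s \mapsto \lambda_n^{-(t-s)} e_n^s$. Both arguments then compute the Hilbert--Schmidt norm as $\sum_{n} \lambda_n^{-2(t-s)}$, invoke Weyl's law to get convergence precisely when $t > s + \tfrac14 d$, and identify the Banach space produced in the proof of Theorem 3.2(i) with $\ms L_{-t}(D)$ via the range of the operator.
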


\begin{proof}
    Fix $\eps > 0$. Define the linear operator $A: \lsp(D) \to \lsp(D)$ by $f \mapsto f \circ (-\Delta)^{-\eps}$. Notice that $(-\Delta)^{-\eps}: \ms L_{-s-\eps}(D) \to \lsd$ is a Hilbert space isomorphism, so $A$ is well-defined and in fact a Hilbert space isomorphism onto its image $\ms L_{-s-\eps}'(D)$. For $f \in \lsp(D)$ and $u \in \ms L_{-s}(D)$,
    \[ \big( Af \big)(u) = f \big( (-\Delta)^{-\eps} u \big) = \big( (-\Delta)^{-\eps} u, \bar a_f \big)_{\lsm} = \big( u, (-\Delta)^{-\eps} \bar a_f \big)_{\ms L_{-s}}, \]
    where $a_f$ is the Riesz representative of $f$ in $\lsd$. Observe that $(-\Delta)^\eps a_f$ is the Riesz representative of $Af$ in $\ms L_{-s}(D)$. Furthermore, as $(\lambda_n)_{n\in\N}$ is a sequence of positive real numbers, $(-\Delta)^{-\eps}$ is a positive, self-adjoint linear operator on $\ms L_{-s}(D)$. This means that $A$ is a positive, self-adjoint linear operator on $\lsp(D)$. For $n \in \N$ set $f_n:= (\cdot, v_n^{-s})_{\lsm}$ where $(v_n^{-s})_{n \in \N}$ is the orthonormal basis of $\lsd$ mentioned above, then $(f_n)_{n \in \N}$ is the associated orthonormal basis of $\lsp(D)$. Hence we can write the Hilbert-Schmidt norm of $A$ as
    \[ \norm{A}_{\text{HS}}^2 = \sum_{n \in \N} \norm{Af_n}_{\lsp}^2 = \sum_{n \in \N} \norm{(-\Delta)^{-\eps} v^{-s}_n}^2_{\lsm} = \sum_{n \in \N} \lambda_n^{-2\eps}, \]
    which due to Weyl's law converges if and only if $\eps > \frac14 d$.

    Let us now write $t = s + \eps$, and assume that $t > s + \frac14 d$. Then the map
    \[ A: \lsp(D) \to \lsp(D), \quad f \mapsto f \circ (-\Delta)^{s-t} \]
    is a positive, self-adjoint Hilbert Schmidt operator with range $\ltp(D)$. By Theorem \hyperref[thm:equivalence:operator]{3.2(i)} and its proof we may now conclude that there exists a probability measure $\nu_t$ on the Borel $\sigma$-algebra of $\ms L_{-t}(D)$ such that $(\ms L_{-s}(D), \ms L_{-t}(D), \nu_t)$ is an abstract Wiener space.
\end{proof}

For $u = \sum_n a_n w_n \in \ms L_{-t}(D)$ and $\phi \in \test$ we may write
\[ \brak{u, \phi} = \sum_{n \in \N} a_n \brak{w_n, \phi} = \Big( (-\Delta)^{-t}u, (-\Delta)^{t} \bar \phi \Big)_{\lt} = \Big( u, (-\Delta)^{2t} \bar \phi \Big)_{\ms L_{-t}}. \]
For fixed $\phi$ this describes a continuous linear function of $u$, which we denote here by $g_{\phi} \in \ms L_{-t}'(D)$. Denote the restriction of $g_\phi$ to $\lsd$ by $\tilde g_\phi$, then for $u \in \lsd$,
\begin{align*} 
\tilde g_\phi(u) &= \Big( (-\Delta)^{-t} u, (-\Delta)^{t} \bar \phi \Big)_{\lt} \\
&= \Big( (-\Delta)^{-s} u, (-\Delta)^{s} \bar \phi \Big)_\lt = \Big( u, (-\Delta)^{2s} \bar \phi \Big)_{\lsm}
\end{align*}
so $\lownorm{\tilde g_\phi}_{\ms L_{-s}'} = \lownorm{ (-\Delta)^{2s} \phi}_{\lsm} = \norm{\phi}_{\ls}$. Suppose now that $Z_t$ is an $\ms L_{-t}(D)$-valued random field with law $\nu_t$, then 
\[ \E \big[ \brak{Z_t, \phi} \overline{\brak{Z_t, \psi}} \big] = \E \big[ g_\phi(Z_t) \bar{g_\psi(Z_t)} \big] = (\tilde g_\phi, \tilde g_\psi)_{\lsp} = (\phi, \psi)_{\ls}. \]
This covariance structure is precisely the covariance of a fractional Gaussian field. Note that $Z_t$ is a proper Gaussian, so it is completely determined by its covariance function. As we can see, the covariance function restricted to $\test$ no longer relies on $t$. Hence we can define the fractional Gaussian fields in the following way.

\begin{theorem}
    Take $s > 0$ and let $D \sub \rd$ be an open, bounded, non-empty set. There exists a complex Gaussian field $Z$ such that 
    \[ \brak{Z, \phi} \sim \mc{CN} \left(0, \norm{\phi}_{\ls}^2, 0 \right) \]
    for all $\phi \in \test$. This field $Z$ is called the \emph{complex fractional Gaussian field of order $s$}.
\end{theorem}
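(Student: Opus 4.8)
The plan is to recognise that this theorem is essentially a repackaging of the construction carried out immediately above it, so that nearly all of the analytic work has already been done. First I would fix any admissible exponent $t > s + \frac14 d$ and invoke the previous Proposition, which furnishes an abstract Wiener space $(\lsd, \ms L_{-t}(D), \nu_t)$. I would then simply set $Z := Z_t$, where $Z_t$ is an $\ms L_{-t}(D)$-valued random field with law $\nu_t$. Since $\nu_t$ is the law of a field on a complex abstract Wiener space, $Z$ is automatically a proper $\ms L_{-t}(D)$-valued Gaussian field by the observation following \Cref{def:AWS}.

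Next I would make the pairing $\brak{Z, \phi}$ precise and read off its moments. For each $\phi \in \test$, the functional $g_\phi \in \ms L_{-t}'(D)$ constructed above represents the distributional action, so that $\brak{Z, \phi} = g_\phi(Z)$. Because $Z$ is a proper Gaussian field, the equivalent (weak) characterisation of properness recorded in \Cref{ssec:gaussian_fields} shows that $g_\phi(Z)$ is a proper complex Gaussian random variable, hence centred with vanishing pseudo-variance. Its variance is the abstract Wiener space covariance $c_Z(g_\phi, g_\phi) = \norm{g_\phi}_{H'}^2$ with $H = \lsd$; since this norm is by definition that of the restriction $\tilde g_\phi$, the preceding identity $\norm{\tilde g_\phi}_{\lsp} = \norm{\phi}_{\ls}$ gives variance $\norm{\phi}_{\ls}^2$. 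Assembling mean $0$, variance $\norm{\phi}_{\ls}^2$, and pseudo-variance $0$ yields exactly $\brak{Z, \phi} \sim \mc{CN}\big(0, \norm{\phi}_{\ls}^2, 0\big)$ for every $\phi \in \test$, as claimed.

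There is no genuine obstacle at this stage: the substantive content lives in the previous Proposition (the Hilbert--Schmidt estimate powered by Weyl's law) and in the covariance computation identifying $\norm{g_\phi}_{\lsp}$ with $\norm{\phi}_{\ls}$. The only point worth flagging is that the ambient space $\ms L_{-t}(D)$, and hence the realisation $Z$, depend on the auxiliary parameter $t$, whereas the theorem asserts a fixed law on test functions. This is legitimate precisely because, as noted in the remark preceding the statement, the covariance restricted to $\test$ is independent of $t$; thus the finite-dimensional distributions of $\big(\brak{Z, \phi}\big)_{\phi \in \test}$ are canonical, and for the existence claim it suffices to select a single admissible $t$.
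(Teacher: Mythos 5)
Your proposal is correct and follows essentially the same route as the paper: the paper's ``proof'' is precisely the discussion preceding the theorem, namely taking $Z := Z_t \sim \nu_t$ from the abstract Wiener space $(\lsd, \ms L_{-t}(D), \nu_t)$ of the previous Proposition, identifying $\brak{Z,\phi} = g_\phi(Z)$, and using $\lownorm{\tilde g_\phi}_{\lsp} = \norm{\phi}_{\ls}$ together with properness of $Z$ to conclude $\brak{Z,\phi} \sim \mc{CN}\big(0, \norm{\phi}_{\ls}^2, 0\big)$. Your closing remark about the $t$-independence of the covariance on $\test$ is also exactly the point the paper makes to justify calling this \emph{the} complex fractional Gaussian field.
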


By \Cref{rem:ls_equivalence} we obtain for $s = \frac12$ the definition of a homogeneous Gaussian free field as stated in \cite{ruszel}.

\vspace{1cm}

\subsection*{Funding}
T.J.v.L. and W.M.R. are funded by the Vidi grant VI.Vidi.213.112 from the Dutch Research Council. 

\subsection*{Acknowledgements} 
T.J.v.L. thanks Jiedong Jiang for help with \Cref{ex:unbounded_real_structure}, and Joppe Stokvis for our many discussions on all aspects of the article. 
W.M.R. would also like to thank Vittoria Silvestri for useful discussions about abstract Wiener spaces.
Both authors thank the anonymous reviewers for their valuable input and clarifications.

\bibliographystyle{alpha}
\bibliography{arxiv_MPAG_bib}

\end{document}